\theoremstyle{plain}
\newtheorem{corollary}{Corollary}
\newtheorem{lemma}{Lemma}
\newtheorem{proposition}{Proposition}
\newtheorem{theorem}{Theorem}
\newtheorem*{claim*}{Claim}
\theoremstyle{definition}
\newtheorem{definition}{Definition}
\begin{document}

\title{New constructions and bounds for Winkler's hat game}
\author{Maximilien Gadouleau\footnote{School of Engineering and Computing Sciences, Durham University, Durham, UK. Email: \texttt{m.r.gadouleau@durham.ac.uk}}\and Nicholas Georgiou\footnote{Department of Mathematical Sciences, Durham University, Durham, UK. Email: \texttt{nicholas.georgiou@durham.ac.uk}}}

\maketitle

\begin{abstract}
Hat problems have recently become a popular topic in combinatorics and discrete mathematics. These have been shown to be strongly related to coding theory, network coding, and auctions. We consider the following version of the hat game, introduced by Winkler and studied by Butler et al. A team is composed of several players; each player is assigned a hat of a given colour; they do not see their own colour, but can see some other hats, according to a directed graph. The team wins if they have a strategy such that, for any possible assignment of colours to their hats, at least one player guesses their own hat colour correctly. In this paper, we discover some new classes of graphs which allow a winning strategy, thus answering some of the open questions in Butler et al. We also derive upper bounds on the maximal number of possible hat colours that allow for a winning strategy for a given graph.
\end{abstract}

\section{Introduction} \label{sec:introduction}

Hat games are a popular topic in combinatorics. Typically, a hat game involves $n$ players, each wearing a hat that can take a colour from a given set of $q$ colours. No player can see their own hat, but each player can see some subset of the other hats. All players are asked to guess the colour of their own hat at the same time. For an extensive review of different hat games, see \cite{Krz12}. Different variations have been proposed: for instance, the players can be allowed to pass \cite{Ebe98}, or the players can guess their respective hat's colour sequentially \cite{Krz10}. The variation in \cite{Ebe98} mentioned above has been investigated further (see \cite{Krz12}) for it is strongly connected to coding theory via the concept of covering codes \cite{CHLL97}; in particular, some optimal solutions for that variation involve the well-known Hamming codes \cite{EMV03}. In the variation called the ``guessing game,'' players are not allowed to pass, and must guess simultaneously \cite{Rii07}. The team wins if everyone has guessed their colour correctly; the aim is to maximise the number of hat assignments which are correctly guessed by all players. This version of the hat game has been further studied in \cite{Rii07a, GR11} due to its relations to graph entropy, to circuit complexity, and to network coding, which is a means to transmit data through a network which allows the intermediate nodes to combine the packets they receive \cite{ACLY00}.

In this paper, we are interested in the following hat problem, a small variation to Winkler's hat game presented in \cite{Win01}. We are given a directed graph $D$ (without loops and repeated arcs, but possibly with bidirectional edges) on $n$ vertices and a finite alphabet $[q] = \{0,\ldots,q-1\}$ ($q \ge 2$).  We say that $f = (f_1,\ldots,f_n) : [q]^n \to [q]^n$ is a $D$-function if every local function $f_v: [q]^n \to [q]$ only depends on the values in the in-neighbourhood of $v$ in $D$: $f_v(x) = f_v(x_{N^-(x)})$. We ask whether there is a $D$-function over $[q]$ such that for any $x = (x_1,\ldots,x_n) \in [q]^n$, $f_v(x) = x_v$ for some vertex $v$. In that case, we say that $D$ is $q$-{\em solvable} and that $f$ \emph{solves} $D$.

In terms of the hat game, each vertex in the graph represents a player, an arc from player $u$ to $v$ means that $v$ can see $u$. The set $[q]$ then represents the possible colours of their hats and $x = (x_1,\ldots,x_n) \in [q]^n$ represents a possible hat assignment. Each player $v$ must guess the colour of their hat according to some pre-determined rule which can only depend on the hats that they see: $f_v(x_{N^-(x)})$. If one player guesses correctly, i.e., $x_v = f_v(x)$, then the team wins; if all guess incorrectly, the team loses. The question is then to come up with a winning strategy regardless of the hat assignment.

Clearly, if $D$ is $q$-solvable, then it is also $(q-1)$-solvable. The clique $K_q$ is $q$-solvable \cite{Win01}: if we denote the players as elements in $[q]$, then $v$ guesses that the sum of all hat assignments is equal to $v$ modulo $q$: $f_v(x) = -\sum_{u \ne v} x_u + v$. More generally, if the players play on $K_n$, then there is a strategy which guarantees that at least $\lfloor n/q \rfloor$ players guess correctly (simply split $K_n$ into $\lfloor n/q \rfloor$ cliques $K_q$). The case for $K_n$ and $q=2$ colours with unequal probabilities was further studied in \cite{Fei04, Doe05}; its relation to auctions has been revealed in \cite{AFGHIS11} and developed in \cite{BNW13}. 

Results for other classes of graphs have been found in the literature. Butler et al.\@ proved in \cite{BHKL08} that for any $q$, there exists a $q$-solvable undirected bipartite graph. Unfortunately, that graph has a doubly exponential number of vertices. In the same paper, they also proved that undirected trees are not $3$-solvable. 

The main contributions of this paper are as follows. In \cite{BHKL08}, it is asked whether there exist $K_q$-free $q$-solvable undirected graphs with a polynomial number (in $q$) of vertices. We give an emphatic affirmative answer: for any $\epsilon$, there exist $K_{\epsilon q}$-free $q$-solvable graphs with a linear number of vertices; moreover, we present a class of $K_\omega$-free graphs with $\omega = o(q)$ which are $q$-solvable and have a polynomial number of vertices. We also refine the multiplicative constant for some values of $\epsilon$ by considering small undirected graphs or directed graphs. We also prove some non-solvability results for bipartite graphs and for graphs with a large independent set. Another question asked in \cite{BHKL08} concerns so-called \emph{edge-critical} graphs, i.e., undirected graphs which are $q$-solvable but which have no $q$-solvable proper spanning subgraph. Clearly, the only edge-critical graph for $q=2$ colours is $K_2$; \cite{BHKL08} asks whether there exists an infinite family of edge-critical graphs for any other $q \ge 3$. By studying the solvability of cycles, we are able to show that the cycles whose length are a multiple of six form an infinite family of edge-critical graphs for 3 colours.

The rest of the paper is organised as follows. In Section~\ref{sec:undirected}, we prove the existence of bipartite or $K_\omega$-free $q$-solvable undirected graphs with a relatively small number of vertices. In Section~\ref{sec:directed}, we refine some constructions by extending our consideration to directed graphs. We then derive some non-solvability results in Section~\ref{sec:non-solvability}. Finally, we prove the existence of a class of edge-critical $3$-solvable graphs in Section~\ref{sec:even_cycles}.


\section{Undirected constructions} \label{sec:undirected}

In \cite{BHKL08}, it is proved that for any $q \ge 2$ there exists a $q$-solvable bipartite graph with a doubly exponential number of vertices ($q^{q^{q-1}} + q-1$ vertices to be exact). We refine their argument to construct a $q$-solvable bipartite graph with only an exponential number of vertices.

%
%
%

We say that a set of words $S$ in $[q]^m$ is {\em distinguishable} if there exists a word $x \in [q]^m$ such that $d_H(x,s) \le m-1$ for all $s \in S$, where $d_H$ is the Hamming distance.  Alternatively, using the terminology of \cite{CG12}, this is equivalent to $S$ having remoteness at most $m-1$. The main reason we are interested in distinguishable sets is as follows. If in a graph there is an independent set $M$ of cardinality $m$, and the vertices in $M$ know that their hat assignment $x \in [q]^m$ is any possible element of a set $S \subseteq [q]^m$, then there exist guessing functions for the vertices of $M$ achieving at least one correct guess if and only if $S$ is distinguishable. 

\begin{theorem}[See \cite{BHKL08}] \label{th:bipartite}
The complete bipartite graph $K_{q-1,(q-1)^{q-1}}$ is $q$-solvable.
\end{theorem}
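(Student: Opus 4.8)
The plan is to exploit the bipartition. Call the side of size $q-1$ the set $A$ and the side of size $(q-1)^{q-1}$ the set $B$; write $y\in[q]^{q-1}$ for the colouring of $A$, $z\in[q]^{B}$ for the colouring of $B$, and let $B$ do the bulk of the work while $A$ distinguishes a small residual set of colourings. A strategy for $B$ is a tuple $g=(g_b)_{b\in B}$ of functions $g_b\colon[q]^{q-1}\to[q]$ (each $b$ sees all of $A$); for such a $g$ set
\[
  Y(z)=\bigl\{\,y\in[q]^{q-1}\ :\ g_b(y)\neq z_b\ \text{for every}\ b\in B\,\bigr\},
\]
the colourings of $A$ that $B$ fails to catch when $B$ is coloured by $z$. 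I claim it suffices to design $g$ so that $|Y(z)|\le q-1$ for every $z$. Indeed, each of the $q-1$ vertices of $A$ sees all of $z$ and hence knows $Y(z)$; since $|Y(z)|\le q-1$, the independent set $A$ can distinguish $Y(z)$ (enumerate $Y(z)=\{y^{(1)},\dots,y^{(k)}\}$ with $k\le q-1$ and let $a_i$ guess $y^{(i)}_i$ for $i\le k$, arbitrarily otherwise). Then for any colouring $(y,z)$: if $y\notin Y(z)$ some vertex of $B$ is correct, and if $y=y^{(i)}$ then $a_i$ is correct. So a winning strategy exists as soon as $g$ makes every $Y(z)$ have size at most $q-1$.

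Next, $|Y(z)|\le q-1$ for all $z$ is equivalent to the family $\{g_b:b\in B\}$ being \emph{separating} for $q$-subsets: every $q$-element set $\{y^{(1)},\dots,y^{(q)}\}\subseteq[q]^{q-1}$ is mapped bijectively onto $[q]$ by some $g_b$. (If $q$ distinct colourings all lay in $Y(z)$, then on that set every $g_b$ would miss the value $z_b$ and so be non-surjective; conversely, from a $q$-set separated by no $g_b$ one builds a bad $z$ by picking each $z_b$ outside the image of $g_b$ on that set.) Taking $B$ to consist of \emph{all} $q^{q^{q-1}}$ functions $[q]^{q-1}\to[q]$ makes this trivial and recovers the doubly-exponential bipartite graph of \cite{BHKL08}; the content of the present statement is that a separating family of only $(q-1)^{q-1}$ functions exists. (If one exhibits a separating family of fewer functions, pad $B$ with vertices playing constant strategies; this only shrinks each $Y(z)$.)

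For the separating family I would argue according to $q$. When $q=3$ — the tight case, $(q-1)^{q-1}=4$ — take the four maps $y\mapsto y_1$, $y\mapsto y_2$, $y\mapsto y_1+y_2$, $y\mapsto y_1-y_2$ on $\GF(3)^2$, i.e.\ the four parallel classes of lines of the affine plane $\mathrm{AG}(2,3)$; a short case check shows every $3$-subset is separated by one of them (a collinear triple is a transversal of the three other classes, and a non-collinear triple has three distinct side-directions and is a transversal of the fourth class). For larger $q$ one can build the family algebraically when $q$ has enough structure, and in general by a counting argument: a uniformly random $g_b\colon[q]^{q-1}\to[q]$ separates a fixed $q$-subset with probability $q!/q^q$, so $(q-1)^{q-1}$ suitably chosen maps separate everything provided $\binom{q^{q-1}}{q}\bigl(1-q!/q^q\bigr)^{(q-1)^{q-1}}<1$, which holds for all sufficiently large $q$; the moderate values for which the crude first-moment bound is too weak require a sharper (or a known perfect-hash-family) construction of size at most $(q-1)^{q-1}$.

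The reduction and the $A$-side distinguishing step are routine; the crux — and where I expect the real work to lie — is producing a separating family of size exactly $(q-1)^{q-1}$. The count is essentially the smallest one for which the argument can succeed (the equality in the $q=3$ case is a warning that there is no slack to waste), so eliminating the bad colourings $z$ with so few functions of $B$ forces one either to find the right algebraic construction or to push the counting estimate through for each small $q$ individually.
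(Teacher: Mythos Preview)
Your reduction is sound up to the point where you identify what $B$'s strategy must achieve, and the distinguishing step on the $A$-side is correct. The gap is that you never actually build the separating family for general $q$: you handle $q=3$ by hand, invoke a first-moment bound for large $q$, and explicitly concede that ``moderate values'' are left open. That is not a proof of the theorem as stated; it is a proof outline with an unfilled hole precisely at the crux you yourself flag.

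The deeper issue is that you have over-constrained the problem. Requiring $|Y(z)|\le q-1$ for every $z$ (equivalently, that $\{g_b\}$ be a perfect hash family of order $q$ on $[q]^{q-1}$) is much stronger than what the $A$-side actually needs. The $q-1$ vertices of $A$ can distinguish any set contained in a Hamming ball of radius $q-2$ --- a set of size $q^{q-1}-(q-1)^{q-1}$, not merely $q-1$. The paper exploits exactly this slack: it indexes the right side by $z\in\{1,\dots,q-1\}^{q-1}$ and lets $w_z$ guess $0$ if $x$ disagrees with $z$ everywhere and $\min\{i:x_i=z_i\}$ otherwise. A short reverse induction on the coordinate index then shows that for \emph{every} colouring $y$ of the right side there must exist some $z$ with $y_z=0$, and for that $z$ the fail set is contained in $B_{q-2}(z)$, hence distinguishable by $A$. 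This gives an explicit, uniform construction for all $q$ with no probabilistic step and no case analysis. Your perfect-hash reformulation, while correct, throws away the structure that makes the explicit construction possible.
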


\begin{proof}
Set $m=q-1$, and label the left vertices of $K_{q-1,(q-1)^{q-1}}$ by $v_1,\dotsc,v_m$.  Write $[q]_+$ for the set $\{1,\dots,q-1\}$ (so $[q]_+ \subseteq [q]$) and label the right vertices of $K_{q-1,(q-1)^{q-1}}$ by $w_z$ for $z \in [q]_+^m$.  For each $z \in [q]_+^m$ define the guessing function $f_z : [q]^m \to [q]$ by
\[
f_z(x) = \begin{cases}
0 & \text{if $d_H(x,z) = m$}\\
\min\{ i : x_i = z_i \} & \text{if $d_H(x,z) < m$}\\
\end{cases}
\]

It is enough to show that for any hat configuration $(x,y) = (x_1,\dots,x_m,y_{(1,\dots,1)},\dots,y_{(q-1,\dots,q-1)})$ if all the vertices $w_z$ guess incorrectly, then the vertices $v_i$ know that the vector $x$ lies in some distinguishable set.

That is, it is enough to show that for all $y$ there exists $a \in [q]^m$ such that
\[
\bigcap_{z \in [q]_+^m} f_z^{-1}(y_z)^{\rm c} \subseteq B_{m-1}(a).
\]
(The $m$ components of the vector $a$, which depends on $y$, are exactly the guessing functions for the vertices $v_1,\dots,v_m$.)

We prove by (reverse) induction on $i$ the following:
\begin{claim*}
Suppose $(x,y) \in [q]^m \times [q]^{[q]_+^m}$ is a configuration of hats guessed incorrectly by every vertex.  Then, for every $i = 1,\dots,m$, and every $(z_1,\dots,z_{i-1}) \in [q]_+^{i-1}$ there exists $(z_i,\dots,z_m) \in [q]_+^{m-i+1}$ with $y_{(z_1,\dots,z_m)} \not\in \{i,\dotsc,m\}$.
\end{claim*}

\begin{proof}[Proof of Claim]
Let $i=m$, and fix $z_1,\dots,z_{m-1}$.  Consider the variables $y_{(z_1,\dots,z_{m-1},z)}$ for $z \in [q]_+$; if all are equal to $m$, then
\[
X_m(z) := f_{(z_1,\dots,z_{m-1},z)}^{-1}(y_{(z_1,\dots,z_{m-1},z)}) = \{ x \in [q]^m : x_i \neq z_i \text{ for all $i < m$ and } x_m = z\}.
\]
Hence
\[
\bigcup_{z \in [q]_+} X_m(z) = \{ x \in [q]^m : x_i \neq z_i \text{ for all $i<m$ and } x_m \neq 0 \}
\]
implying that $\displaystyle\bigcap_{z \in [q]_+} X_m(z)^{\rm c} = B_{m-1}(z_1,\dotsc,z_{m-1},0)$, contradicting the fact that the vertices $v_1,\dotsc,v_m$ guess incorrectly.  Therefore there exists some $z \in [q]_+$ with $y_{(z_1,\dots,z_{m-1},z)} \neq m$.

Now, suppose the statement is true for $i >1$; we show it holds for $i-1$.  Fix $z_1,\dots,z_{i-2}$; for each $a \in [q]_+$, by our inductive hypothesis there exist $z_i(a),\dots,z_m(a) \in [q]_+$ with
\[
y_{(z_1,\dotsc,z_{i-2},a,z_i(a),\dots,z_m(a))} \not \in \{i,\dots,m\}.
\]
So, it is enough to show that for at least one $a \in [q]_+$ the variable $y_{(z_1,\dotsc,z_{i-2},a,z_i(a),\dots,z_m(a))}$ is not equal to $i-1$.  For a contradiction, suppose not, so that all such variables equal $i-1$.  Then,
\[
\begin{split}
X_{i-1}(a) &:= f_{(z_1,\dots,z_{i-2},a,z_i(a),\dots,z_m(a))}^{-1}(y_{(z_1,\dots,z_{i-2},a,z_i(a),\dots,z_m(a))})\\
&= \{ x \in [q]^m : x_j \neq z_j \text{ for all $j < i-1$ and } x_{i-1} = a\}.
\end{split}
\]
Therefore,
\[
\bigcup_{a \in [q]_+} X_{i-1}(a) = \{ x \in [q]^m : x_j \neq z_j \text{ for all $j<i-1$ and } x_{i-1} \neq 0 \}
\]
implying that $\displaystyle\bigcap_{a \in [q]_+} X_{i-1}(a)^{\rm c} \subseteq B_{m-1}(z_1,\dotsc,z_{i-2},0,\dots,0)$ contradicting the fact that $v_1,\dots,v_m$ guess incorrectly.
\end{proof}

Finally, applying the claim for $i=1$, we find a $z \in [q]_+^m$ where $y_z$ cannot take any value in $\{1,\dots,m\}$.  This implies that $y_z = 0$ and $f_z^{-1}(y_z)^{\rm c} = B_{m-1}(z)$, so that at least one of $v_1,\dots,v_m$ guesses correctly.
\end{proof}

The {\em lexicographic product} of a directed graph $D = (V,E)$ and a clique $K_r$, denoted as $(D,r)$, is defined as the graph with vertex set $V \times [r]$, where $((u,a), (v,b))$ is an arc if and only if either $(u,v) \in E$ or $u=v$ and $a\neq b$. If $D$ has $n$ vertices and clique number $\omega$, then the graph $(D,r)$ has $rn$ vertices and clique number $r \omega$.

\begin{lemma}[The blow-up lemma] \label{lem:blow-up}
If $G$ is a $p$-solvable directed graph, then $(G, r)$ is a $q$-solvable graph, where $q = pr$.
\end{lemma}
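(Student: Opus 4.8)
The plan is to splice together a $p$-solving strategy on $G$ with the modular ``sum'' strategy for the clique $K_r$, exploiting the key feature of the lexicographic product: in $(G,r)$ the vertex $(v,b)$ sees the \emph{entire} blown-up in-neighbourhood $N_G^-(v)\times[r]$ together with every other copy $(v,a)$, $a\neq b$, of its own vertex. Write $V$ for the vertex set of $G$, $n=|V|$, and let $g=(g_v)_{v\in V}$ be a $G$-function over $[p]$ that solves $G$. Identify the alphabet $[q]$ with $[p]\times[r]$ (via $(y,z)\mapsto yr+z$, say), and for a configuration $x$ on $(G,r)$ write $x_{(v,b)}=(y_{(v,b)},z_{(v,b)})\in[p]\times[r]$.

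First I would describe how to ``select'' one copy in each group. For $x$ as above and each $v\in V$ put $\beta_v=\beta_v(x):=\sum_{a\in[r]}z_{(v,a)}\bmod r$, and define a summary vector $w=w(x)\in[p]^n$ by $w_v:=y_{(v,\beta_v)}$; so $w_v$ is the $[p]$-coordinate of the hat of the selected copy $(v,\beta_v)$. The point is that $(v,b)$, seeing all of $N_G^-(v)\times[r]$, can compute $\beta_u$ and hence $w_u$ for every $u\in N_G^-(v)$, even though it cannot compute $\beta_v$ or $w_v$. It can also compute $c_{(v,b)}:=b-\sum_{a\neq b}z_{(v,a)}\bmod r$, which is exactly the value $z_{(v,b)}$ would have to take for $(v,b)$ to be the selected copy, i.e.\ for $\beta_v=b$. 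I would then define the guessing function of $(v,b)$ by $f_{(v,b)}(x):=\bigl(g_v(w(x)),\,c_{(v,b)}\bigr)$, and check that this is a legitimate $(G,r)$-function: the first coordinate depends only on $\{w_u:u\in N_G^-(v)\}$, and each such $w_u$ is determined by the hats on $\{u\}\times[r]\subseteq N_{(G,r)}^-((v,b))$, while the second coordinate depends only on $\{x_{(v,a)}:a\neq b\}\subseteq N_{(G,r)}^-((v,b))$.

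Finally I would verify correctness. Given any configuration $x$, form $w=w(x)$; since $g$ solves $G$ there is $v^*\in V$ with $g_{v^*}(w)=w_{v^*}$. Let $b^*:=\beta_{v^*}(x)$ and consider the copy $(v^*,b^*)$. Its guess has first coordinate $g_{v^*}(w)=w_{v^*}=y_{(v^*,\beta_{v^*})}=y_{(v^*,b^*)}$, which is correct; and it has second coordinate $c_{(v^*,b^*)}=b^*-\sum_{a\neq b^*}z_{(v^*,a)}$, which equals $z_{(v^*,b^*)}$ precisely because $\sum_{a\in[r]}z_{(v^*,a)}=\beta_{v^*}=b^*\pmod r$. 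Hence $(v^*,b^*)$ guesses its own hat $x_{(v^*,b^*)}=(y_{(v^*,b^*)},z_{(v^*,b^*)})$ correctly, so $f$ solves $(G,r)$; since $|[q]|=pr=q$ this gives the claim.

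I expect the only real subtlety — and it is a mild one — to be the locality check: one must notice that $(v,b)$ sees \emph{all} $r$ copies of every $G$-in-neighbour of $v$, which is exactly what lets it reconstruct the summary vector $w$ on $N_G^-(v)$ and so evaluate $g_v$. Everything else is the routine $K_r$ bookkeeping that makes the $z$-coordinate of the selected copy predictable from the other copies in its group, plus routine modular arithmetic.
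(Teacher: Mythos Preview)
Your proof is correct, and it follows the same overall scheme as the paper: identify $[q]$ with $[p]\times[r]$, run the modular $K_r$ strategy on the $[r]$-coordinate so that exactly one copy per fibre gets that coordinate right, and feed a per-fibre summary in $[p]$ to the $G$-solver for the other coordinate. The only difference is the choice of summary. The paper takes $X_v=\sum_{a\in[r]}x_{(v,a)}\pmod p$ and has every copy $(v,a)$ guess its $[p]$-coordinate so as to force $X_v=f_v(X)$; thus either all copies in a fibre get the $[p]$-coordinate right or none do, and the $[r]$-game then singles out one of them. You instead use the $[r]$-sum $\beta_v$ to \emph{select} a representative $(v,\beta_v)$ and take that copy's $[p]$-coordinate as the summary $w_v$; only the selected copy is guaranteed to get the $[p]$-coordinate right, but that is exactly the copy that also wins the $[r]$-game. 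Your version is perhaps more narrative (one designated copy literally plays the $G$-game), while the paper's is more symmetric and purely additive; both are equally short once the product decomposition is in place.
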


\begin{proof}
Let $f$ be the corresponding guessing function that solves $G$ over $p$ colours. For any vertex $(v,a)$ in $(G,r)$, we denote the configuration as $(x_{(v,a)}, y_{(v,a)}) \in [p] \times [r]$ and we also denote $X_v = \sum_{a \in [r]} x_{(v,a)}$, $Y_v = \sum_{a \in [r]} y_{(v,a)}$ and write $X$ for the vector $(X_v,v \in G)$. We claim that the $(G,r)$-function $g$, defined as follows for each $(v,a)$, never fails:
\[
	g_{(v,a)}(x,y) = \left(f_v(X) - X_v + x_{(v,a)}, -Y_v + y_{(v,a)} - a \right).
\]
Suppose $(x,y)$ is guessed wrong by all vertices. In particular, it is guessed incorrectly by $(v,a)$, hence either $f_v(X) \ne X_v$ or $Y_v \ne a$. Since this holds for all $a$, in particular this holds for $a = Y_v$; we conclude that $f_v(X) \ne X_v$. Since this holds for all $v$, this violates the fact that $f$ is a solution for $G$.
\end{proof}

\begin{theorem} \label{th:epsilon}
For any $\epsilon > 0$, there exists $n_\epsilon$ such that the following holds. For any $q$, there exists a $q$-solvable undirected graph with at most $n_\epsilon q$ vertices and clique number $\epsilon q$.
\end{theorem}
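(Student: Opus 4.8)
The plan is to combine the bipartite construction of Theorem~\ref{th:bipartite} with the blow-up lemma (Lemma~\ref{lem:blow-up}). Theorem~\ref{th:bipartite} supplies, for every fixed $p \ge 2$, a $p$-solvable graph $G_p := K_{p-1,(p-1)^{p-1}}$ which is bipartite (hence of clique number $2$) and has a constant number of vertices $N_p := (p-1) + (p-1)^{p-1}$ depending only on $p$. Blowing $G_p$ up by a clique $K_r$ multiplies the number of colours it handles by $r$, multiplies its number of vertices by $r$, and multiplies its clique number by $r$; so the ratio $\text{(clique number)}/\text{(number of colours)}$ stays equal to $2/p$, which we can force below any prescribed level by fixing $p$ large, once and for all, as a function of $\epsilon$ alone.

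Concretely, given $\epsilon > 0$ we may assume $\epsilon < 1$ (for $\epsilon \ge 1$ the clique $K_q$ already works). Fix $p = p(\epsilon) := \lceil 4/\epsilon \rceil$, so $p \ge 2$ and $2/p \le \epsilon/2$, and set $n_\epsilon := N_p(1 + 1/p)$. Let $q$ be arbitrary and put $r := \lceil q/p \rceil$, so that $pr \ge q$. By Lemma~\ref{lem:blow-up}, the undirected graph $(G_p, r)$ is $(pr)$-solvable, and since solvability is monotone in the number of colours (a $q'$-solvable graph is $q$-solvable for every $q \le q'$), $(G_p, r)$ is $q$-solvable. Its number of vertices is $r N_p \le (q/p + 1)N_p \le n_\epsilon q$ (using $q \ge 1$), and its clique number is $r \cdot \omega(G_p) = 2r = 2\lceil q/p \rceil \le 2q/p + 2 \le (\epsilon/2) q + 2$.

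It only remains to absorb the additive $2$: as soon as $q \ge 4/\epsilon$ we have $2 \le (\epsilon/2)q$ and hence the clique number is at most $\epsilon q$, as desired. The finitely many smaller values of $q$ are handled separately — and the statement has content only once $\epsilon q$ is at least $2$ — so, enlarging $n_\epsilon$ by a constant depending on $\epsilon$ if necessary, we may for each such $q$ take any convenient $q$-solvable graph of bounded size. There is no genuinely hard step: the whole argument is a short consequence of Theorem~\ref{th:bipartite} and Lemma~\ref{lem:blow-up}, and the only points requiring care are the choice of $p$ (large, but depending on $\epsilon$ only, so that $N_p$ remains a constant) and the ceiling arithmetic, which must be checked to ensure that both the vertex count and the clique number are linear in $q$ with the correct leading constants.
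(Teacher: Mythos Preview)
Your proof is correct and follows essentially the same approach as the paper: fix $p$ depending only on $\epsilon$, take the bipartite graph $G_p = K_{p-1,(p-1)^{p-1}}$ from Theorem~\ref{th:bipartite}, and blow it up via Lemma~\ref{lem:blow-up}. If anything you are slightly more careful than the paper with the choice of $p$, the ceiling arithmetic, and the treatment of small $q$.
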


\begin{proof}
Firstly, let $p = \lfloor 1/\epsilon \rfloor + 1$ and let $q$ be divisible by $p$. Let $G_p$ be the $p$-solvable bipartite graph in Theorem \ref{th:bipartite} and let $g_p$ denote its size. Then by the blow-up lemma, $(G_p,q/p)$ is a $q$-solvable graph with $g_p q/p$ vertices and clique number $2q/p$. If $q$ is not divisible by $p$, consider $q' = p \lceil q/p \rceil \le q(1 + 1/p)$ and $n_\epsilon = (1 + 1/p)g_p/p$.
\end{proof}

\begin{theorem} \label{th:K-free}
For any $\omega$ such that $\omega \ge \frac{q}{m} \frac{\log \log q}{\log q}$ holds for large enough $q$ and some $m > 0$, there exists a $q$-solvable $K_\omega$-free undirected graph with at most $q^{2m+1}$ vertices for $q$ large enough.
\end{theorem}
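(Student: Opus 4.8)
The plan is to reuse the strategy of Theorem~\ref{th:epsilon} — blow up the bipartite graph of Theorem~\ref{th:bipartite} using Lemma~\ref{lem:blow-up} — but this time letting the inner solvability parameter $p$ grow with $q$. A larger $p$ keeps the clique number of the blow-up small (good for being $K_\omega$-free) but makes the bipartite graph $K_{p-1,(p-1)^{p-1}}$ much larger (bad for the vertex count); the hypothesis $\omega \ge \frac{q}{m}\frac{\log\log q}{\log q}$ is precisely what makes this trade-off land below $q^{2m+1}$ vertices.

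In more detail, the hypothesis forces $\omega \to \infty$ as $q \to \infty$, so we may assume $q$ large and $\omega \ge 3$; and if $\omega > q$ then the clique $K_q$ is already $q$-solvable, $K_\omega$-free, and has $q \le q^{2m+1}$ vertices, so assume also $\omega \le q$. Put $r = \lfloor (\omega-1)/2 \rfloor$ and $p = \lceil q/r \rceil$, so that $p \ge 3$ and $pr \ge q$, and let $G_p = K_{p-1,(p-1)^{p-1}}$ be the $p$-solvable bipartite graph of Theorem~\ref{th:bipartite}. By Lemma~\ref{lem:blow-up}, $(G_p, r)$ is $pr$-solvable and hence $q$-solvable, and since $G_p$ is bipartite its clique number is $2r \le \omega - 1$, so it is $K_\omega$-free. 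There remains only to bound its number of vertices, $r\bigl((p-1) + (p-1)^{p-1}\bigr)$.

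Using $(p-1) + (p-1)^{p-1} \le 2(p-1)^{p-1}$ together with $r(p-1) < q$ (immediate from $p = \lceil q/r \rceil$), the vertex count is at most $2q(p-1)^{p-2}$, so it suffices to show $\log 2 + (p-2)\log(p-1) \le 2m\log q$ for $q$ large. Since $r \ge (\omega-2)/2$ we have $p - 1 < t$, where $t := \frac{2q}{\omega-2} = \frac{2q}{\omega}\bigl(1 + \frac{2}{\omega-2}\bigr)$. The hypothesis gives $\frac{2q}{\omega} \le \frac{2m\log q}{\log\log q}$, hence $t \le \frac{2m\log q}{\log\log q}(1+o(1))$ and $\log t \le \log\log q - (1-o(1))\log\log\log q$; multiplying these, and observing that the extra factor $1 + \frac{2}{\omega-2}$ contributes only a term of order $q^{-1}\log q$, negligible beside $\frac{\log\log\log q}{\log\log q}$, a short computation yields
\[
(p-2)\log(p-1) \;\le\; (p-1)\log(p-1) \;<\; t\log t \;\le\; 2m\log q - c(q),
\]
with $c(q) \to \infty$ (in the tightest regime $c(q)$ is of order $\tfrac{\log q\,\log\log\log q}{\log\log q}$); since $c(q) \ge \log 2$ for $q$ large, $(G_p,r)$ has at most $q^{2m+1}$ vertices, as required.

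The solvability and the clique number are immediate, so the one delicate point is the last estimate. Its margin $c(q)$ comes entirely from the genuine $\log\log\log q$-scale gap between $\log(2q/\omega)$ and $\log\log q$, and it is thin: the main thing to check is that this margin survives both the replacement of $2q/\omega$ by $2q/(\omega-2)$ and the ceiling in $p = \lceil q/r \rceil$, and still absorbs the constant factor coming from the bound $2q(p-1)^{p-2}$.
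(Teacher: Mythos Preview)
Your proof is correct and follows essentially the same approach as the paper: blow up the $p$-solvable bipartite graph $K_{p-1,(p-1)^{p-1}}$ of Theorem~\ref{th:bipartite} via Lemma~\ref{lem:blow-up}, choose $p$ (equivalently $r$) so that the resulting clique number $2r$ falls below $\omega$, and then bound the vertex count by showing $(p-1)\log(p-1)\le 2m\log q$ using the hypothesis $p-1\lesssim 2m\log q/\log\log q$. The only differences are cosmetic---you parametrise by $r=\lfloor(\omega-1)/2\rfloor$ first and handle the integrality and edge cases ($\omega>q$, $pr\ge q$) more explicitly than the paper does, but the computation is the same.
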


\begin{proof}
Let $p = \lfloor \frac{2q}{\omega} \rfloor + 1$. According to Theorem \ref{th:bipartite}, the graph $K_{p-1,(p-1)^{p-1}}$ is $p$-solvable. Then by the blow-up lemma, there exists a $q$-solvable graph with $n := \frac{q}{p} \left( (p-1)^{p-1} + p-1 \right)$ vertices and clique number $2\frac{q}{p} < \omega$. We have $n \le q (p-1)^{p-1}$, and hence for $q$ large enough
\begin{align*}
	p - 1 &\le \frac{2q}{\omega} \le 2m \frac{\log q}{\log \log q}\quad\text{and}\\
	\log n &\le \log q + 2m \frac{\log q}{\log \log q}\left\{ \log(2m) + \log \log q - \log \log \log q \right\} \le \left( 2m + 1\right) \log q,
\end{align*}
and hence $n \le q^{2m+1}$.

\end{proof}

In general, the constant $n_\epsilon$ obtained from Theorem \ref{th:bipartite} decreases rapidly with $\epsilon$. We refine it below for $\epsilon = 2/3$.

\begin{proposition} \label{prop:K22}
The complete bipartite graph $K_{2,2}$ is $3$-solvable.
\end{proposition}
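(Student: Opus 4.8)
The plan is to exhibit an explicit winning strategy built from $\mathbb{Z}_3$-linear guessing functions. Write the two parts of $K_{2,2}$ as $\{v_1,v_2\}$ and $\{v_3,v_4\}$, and identify $[3]$ with $\mathbb{Z}_3$. The point of this labelling is that $v_1$ and $v_2$ both see exactly the hats $x_3,x_4$, while $v_3$ and $v_4$ both see exactly the hats $x_1,x_2$; so each local function depends only on two of the four coordinates. I would try the (deliberately symmetric) rules
$f_1(x)=x_3+x_4$, $f_2(x)=x_3-x_4$, $f_3(x)=-x_1-x_2$, $f_4(x)=x_2-x_1$, and then verify directly that no hat assignment $x=(x_1,x_2,x_3,x_4)\in\mathbb{Z}_3^4$ defeats all four players.

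The verification goes as follows. Suppose $x$ is guessed incorrectly by everyone, and introduce the four "error terms'' $\alpha=x_1-(x_3+x_4)$, $\gamma=x_2-(x_3-x_4)$, $\beta=x_3-(-x_1-x_2)$, $\delta=x_4-(x_2-x_1)$; the hypothesis says precisely that $\alpha,\gamma,\beta,\delta$ are all nonzero in $\mathbb{Z}_3$. A one-line computation modulo $3$ (using $-2\equiv 1$) yields the two identities $\alpha+\gamma=\beta$ and $\alpha-\gamma=\delta$. Since $\alpha,\gamma\in\{1,2\}$, if $\alpha\neq\gamma$ then $\alpha+\gamma=0$, contradicting $\beta\neq 0$; hence $\alpha=\gamma$, and then $\delta=\alpha-\gamma=0$, contradicting $\delta\neq 0$. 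This contradiction shows the strategy never fails, so $K_{2,2}$ is $3$-solvable.

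The only genuine work here is the reverse-engineering of the coefficients: one wants the four error terms to satisfy two independent linear relations over $\mathbb{Z}_3$ that cannot be simultaneously satisfied using only nonzero elements. The key structural observation that drives the choice is that in $\mathbb{Z}_3$ the only way two nonzero elements sum to a nonzero element is if they are equal, so once one relation reads $\alpha+\gamma=\beta$ it already forces $\alpha=\gamma$, and it then suffices to arrange the remaining relation to read $\alpha-\gamma=\delta$; this pins down $f_3,f_4$ up to harmless sign and relabelling choices. I expect this small bit of design to be the main (and essentially the only) obstacle — everything after it is a routine finite check, which one could alternatively just confirm by brute force over the $81$ assignments.
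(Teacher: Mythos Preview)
Your proof is correct and is essentially the same as the paper's: your guessing functions coincide exactly with the paper's choice $(f_1,f_2)=(x_3,x_4)A$, $(f_3,f_4)=(x_1,x_2)A^{-1}$ for $A=\begin{pmatrix}1&1\\1&-1\end{pmatrix}$ over $\mathbb{Z}_3$, and your error-term relations $\alpha+\gamma=\beta$, $\alpha-\gamma=\delta$ are just a rewriting of the paper's condition $wA+u=0$. The only cosmetic difference is that the paper verifies the impossibility by listing $wA$ for the four $w\in\{1,2\}^2$, whereas you argue via the dichotomy $\alpha=\gamma$ versus $\alpha\neq\gamma$; both amount to the same four-case check.
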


\begin{proof}
Denote the bipartition as $\{v_1,v_2\} \cup \{v_3,v_4\}$. With
$$
	A = \begin{pmatrix}
	1 & 1\\
	1 & -1
	\end{pmatrix},
$$
the guessing function is given by
$$
	(f_1,f_2) = (x_3,x_4) A, \qquad (f_3,f_4) = (x_1,x_2)A^{-1}.
$$
Suppose $x$ is guessed wrong by all vertices. The vertices $v_3$ and $v_4$ guess wrong, hence we have
$$
	(x_3,x_4) = (x_1,x_2)A^{-1} + w 
$$
for some $w = (w_1,w_2) \in S:= \{(1,1),(1,2),(2,1),(2,2)\}$. Similarly, we have
$$
	(x_1,x_2) = (x_3,x_4)A + u = (x_1,x_2) + wA + u
$$
for some $u \in S$. However, it can be shown that for any $(w_1,w_2) \in S$, $wA \notin S$ and hence $wA+u \ne (0,0)$. We thus obtain the contradiction $(x_1,x_2) \ne (x_1,x_2)$.
\end{proof}

\begin{corollary} \label{cor:4q/3}
For any $q$ divisible by $3$, there exists a $q$-solvable graph on $4q/3$ vertices with clique number $2q/3$.
\end{corollary}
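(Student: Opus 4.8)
The plan is to derive this as a direct application of the blow-up lemma (Lemma~\ref{lem:blow-up}) to the graph $K_{2,2}$, using Proposition~\ref{prop:K22} as the base case. Concretely, I would set $p = 3$ and $r = q/3$; since $q$ is divisible by $3$, $r$ is a positive integer, so the lexicographic product $(K_{2,2}, r)$ is well-defined.

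First I would invoke Proposition~\ref{prop:K22} to assert that $G := K_{2,2}$ is $3$-solvable. Then, applying Lemma~\ref{lem:blow-up} with $G = K_{2,2}$, $p = 3$, and $r = q/3$, I conclude that $(K_{2,2}, q/3)$ is $q$-solvable, since $q = pr = 3 \cdot (q/3)$.

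Next I would verify the two numerical claims. The graph $K_{2,2}$ has $n = 4$ vertices and, being bipartite (indeed triangle-free), clique number $\omega = 2$. By the parameters recorded in the definition of the lexicographic product immediately before Lemma~\ref{lem:blow-up}, the graph $(K_{2,2}, q/3)$ therefore has $rn = 4q/3$ vertices and clique number $r\omega = 2q/3$, which is exactly what is asserted.

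There is essentially no obstacle here: the only points requiring a moment's care are that $q/3 \in \mathbb{Z}$ (hence the blow-up is legitimate and all counts are integers) and that $K_{2,2}$ has clique number exactly $2$ rather than something larger — both of which are immediate. The real content has already been done in Proposition~\ref{prop:K22} and Lemma~\ref{lem:blow-up}; this corollary is just the bookkeeping that assembles them, and it improves on the generic constant $n_\epsilon$ of Theorem~\ref{th:epsilon} for the case $\epsilon = 2/3$.
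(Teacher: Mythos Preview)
Your proposal is correct and is precisely the intended argument: the paper states the corollary without proof immediately after Proposition~\ref{prop:K22}, and it is meant to follow by blowing up $K_{2,2}$ with $r=q/3$ via Lemma~\ref{lem:blow-up}, exactly as you describe.
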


%
%

\section{Directed constructions} \label{sec:directed}

If we allow directed graphs, then we can further refine the constants obtained in Section \ref{sec:undirected}.

\begin{theorem} \label{th:directed}
If $q$ is even, there exists a $q$-solvable directed graph with $3q/2$ vertices and clique number $q/2$. For any $q$ divisible by 3, there exists a $q$-solvable directed graph on $4q$ vertices of clique number $q/3$. For any $q$ a multiple of four, there exists a $q$-solvable directed graph on $10q$ vertices and with clique number $q/4$.
\end{theorem}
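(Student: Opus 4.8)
The plan is to derive all three statements from the blow-up lemma. For $p\in\{2,3,4\}$, suppose we have an \emph{oriented} digraph $G_p$ — one of clique number $1$, i.e.\ with no pair of vertices joined by arcs in both directions — that is $p$-solvable and has exactly $n_p$ vertices, where $(n_2,n_3,n_4)=(3,12,40)$. Then, whenever $p\mid q$, Lemma~\ref{lem:blow-up} makes the lexicographic product $(G_p,q/p)$ a $q$-solvable digraph, and by the count recorded for the lexicographic product it has $n_p\cdot(q/p)$ vertices and clique number $1\cdot(q/p)=q/p$. Taking $p=2$ (so $q$ even) gives $3q/2$ vertices and clique number $q/2$; $p=3$ (so $3\mid q$) gives $4q$ vertices and clique number $q/3$; and $p=4$ (so $4\mid q$) gives $10q$ vertices and clique number $q/4$. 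So everything reduces to exhibiting $G_2$, $G_3$ and $G_4$.

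The case $p=2$ is immediate: let $G_2$ be the directed cycle on $\{1,2,3\}$ with arcs $1\to2\to3\to1$, which has no bidirectional edge and hence clique number $1$. Let each vertex guess that its colour equals the colour of the vertex it sees, i.e.\ $f_2(x)=x_1$, $f_3(x)=x_2$, $f_1(x)=x_3$. If all three guessed incorrectly then $x_1,x_2,x_3$ would be pairwise distinct, which is impossible over a $2$-letter alphabet; hence $G_2$ is $2$-solvable.

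The real content is constructing the oriented digraphs $G_3$ (on $12$ vertices, $3$-solvable) and $G_4$ (on $40$ vertices, $4$-solvable). The guiding idea is exactly the one that makes $G_2$ work: design a $G_p$-function whose ``every vertex guesses incorrectly'' event becomes a colouring constraint infeasible over $[p]$ — the analogue of ``a directed triangle has no proper $2$-colouring''. Concretely, I would start from a small solvable \emph{undirected} graph with a linear strategy (e.g.\ $K_{2,2}$ over $\mathbb{Z}/3$, as in Proposition~\ref{prop:K22}, for $p=3$, and a comparably small $4$-solvable graph for $p=4$) and turn it into an oriented digraph by replacing each bidirectional edge with a directed gadget routed through auxiliary ``relay'' vertices, giving the relays guessing functions so that the conclusion of the base strategy is recoverable whenever every relay guesses incorrectly. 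One then has to check (i) that the resulting digraph is still solvable — that ``all vertices, relays included, guess incorrectly'' is, via the arc constraints, as infeasible as the bad event for the undirected base — and (ii) that few enough relays are needed to keep the totals at $12$ and $40$.

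Step (i) is where I expect the difficulty to lie. A single relay vertex, whose own hat is adversarial, cannot losslessly forward a colour of $[p]$ (for $p\ge3$ even a linear relay only narrows the forwarded colour to a proper subset of $[p]$ when it guesses wrong), so several relays per edge are needed, with guessing functions tuned so that the joint ``all-incorrect'' event determines the forwarded colour exactly; the precise number required is what pins the vertex counts down to $12$ and $40$. A more computational alternative for these two base cases is to search directly among small oriented digraphs for $3$- and $4$-solvable examples of the prescribed sizes. Either way, once $G_3$ and $G_4$ are available the blow-up lemma finishes the proof.
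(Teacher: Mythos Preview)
Your reduction is correct: by the blow-up lemma the theorem follows once one has oriented (clique-number~$1$) $p$-solvable digraphs $G_p$ on $3$, $12$, $40$ vertices for $p=2,3,4$, and your $G_2$ (the directed $3$-cycle) is exactly right. The gap is that you have not actually constructed $G_3$ and $G_4$. The paragraph about ``relays replacing bidirectional edges'' is a heuristic, not a proof: you do not specify the relays' guessing functions, you do not verify that the all-wrong event becomes infeasible, and you do not show that the vertex counts come out to $12$ and $40$ (starting from $K_{2,2}$ that would require exactly two relays per edge, and you give no argument that two suffice over $[3]$). The ``computational search'' alternative is likewise not a proof as stated.

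The paper supplies the missing step via a specific device you did not hit on: a \emph{$p$-gadget}, an oriented graph $D$ on $n$ vertices with a $D$-function such that whenever every vertex of $D$ guesses wrong, one designated coordinate is forced, $x_1=\phi(x_2,\dots,x_n)$. A gadget lemma then builds a $p$-solvable oriented graph on $n\binom{p}{2}+p$ vertices by taking the transitive tournament on $[p]$ and, for each pair $i>j$, inserting a copy of $D$ with arcs from $i$ into the copy and from the copy into $j$; when all gadget vertices fail, the forced coordinate lets $j$ recover $x_i$, so the standard ``player $j$ bets the total sum is $j$'' strategy for $K_p$ goes through. The required gadgets are a single vertex for $p=2$ (yielding precisely your directed $3$-cycle), the directed triangle for $p=3$ (its all-wrong configurations satisfy $x_1+x_2+x_3=0$, giving $3\cdot\binom{3}{2}+3=12$), and an explicit $6$-vertex oriented graph for $p=4$ (giving $6\cdot\binom{4}{2}+4=40$). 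So the base object is not an undirected solvable graph with its edges orientified, but the clique $K_p$ with its ``back arcs'' replaced by gadgets; this is the concrete construction your proposal lacks.
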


The main strategy to produce a $p$-solvable oriented graph is by using a gadget, defined below.

\begin{definition} \label{def:gadget}
An oriented graph $D$ on $n$ vertices is called a $q$-{\em gadget} if it is not $q$-solvable, but if there exists a $D$-function $f$ over $[q]$ such that any configuration $x$ guessed incorrectly by $f$ satisfies an equality of the form $x_1 = \phi(x_2, \ldots, x_n)$ for some $\phi: [q]^{n-1} \to [q]$.
\end{definition}

\begin{lemma}[The gadget lemma] \label{lem:gadget}
If there exists a $p$-gadget on $n$ vertices, then there exists a $p$-solvable oriented graph on $n \binom{p}{2} + p$ vertices.
\end{lemma}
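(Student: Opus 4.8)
The plan is to build the desired oriented graph by taking a single $p$-gadget $D$ on $n$ vertices and attaching to it a clique $K_p$ whose vertices, rather than relying on their own clique structure in the usual way, instead extract information about the sums of the gadget's special coordinates. Concretely, I would take $\binom{p}{2}$ disjoint copies of $D$, one for each unordered pair $\{i,j\}$ with $i,j \in [p]$, $i\neq j$; call the distinguished ``output'' vertex of the $\{i,j\}$-th copy $u_{ij}$. Then I would add $p$ extra vertices $w_0,\dots,w_{p-1}$ forming a clique $K_p$, with every $u_{ij}$ having arcs into both $w_i$ and $w_j$ (and no arcs back), so that $w_k$ sees exactly the outputs $\{u_{kj} : j \neq k\}$. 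This gives $n\binom{p}{2} + p$ vertices, as claimed.

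Next I would specify the strategy. On each copy of $D$ the players use the gadget function $f$ from Definition~\ref{def:gadget}; so if a configuration is guessed incorrectly on the $\{i,j\}$-copy, the output coordinate there, call it $a_{ij}$, satisfies a fixed equation $a_{ij} = \phi(\text{rest of that copy})$ — in particular its value is forced, and crucially the $w$'s do not need to know $\phi$, only that $a_{ij}$ takes a determined value which they can read off since they see $u_{ij}$. For the clique vertices, I would have $w_k$ guess using the standard $K_p$ trick but summing the values it sees: $w_k$ guesses that $x_{w_k} = k - \sum_{j \neq k} a_{kj} \pmod p$. The point is that if \emph{every} copy of $D$ is guessed incorrectly, then all the $a_{ij}$ are determined values in $[p]$, and the total sum $\sum_{k} x_{w_k} + \sum_{\{i,j\}} (\text{something})$ behaves like a genuine $K_p$ configuration, so at least one $w_k$ must guess correctly; conversely if some copy of $D$ is guessed correctly we are already done. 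Hence the whole graph is $p$-solvable.

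The main obstacle, and the step I would spend the most care on, is getting the bookkeeping of the clique argument exactly right: I need that the $\binom{p}{2}$ forced values $a_{ij}$, once each is pinned down by its gadget equation, can be combined so that the $p$ equations ``$x_{w_k} = k - \sum_{j\neq k} a_{kj}$'' cannot all fail simultaneously. The clean way to see this is to note that each pair $\{i,j\}$ contributes $a_{ij}$ to exactly the two sums indexed by $w_i$ and $w_j$; summing the $p$ guess-equations over $k$ and using $\sum_k k = \binom{p}{2} \equiv$ (fixed constant) $\pmod p$ gives a single scalar identity that a configuration making all $w_k$ guess wrong would have to violate — exactly the classical argument for $K_p$, with the $a_{ij}$ playing the role of the hidden coordinates. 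I would also need to check the degenerate direction carefully: a $p$-gadget is by definition \emph{not} $p$-solvable, so there genuinely exist configurations on a lone copy of $D$ guessed entirely incorrectly, and I must ensure the construction still forces a correct guess somewhere — which it does precisely because the $w$-clique absorbs the one residual degree of freedom left by each gadget. The remaining verifications (that each $f_v$ only depends on in-neighbours, that the vertex and arc counts match) are routine.
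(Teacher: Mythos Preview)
Your construction has a genuine gap: with the gadget copies disjoint from the $w$-vertices on the input side, the gadgets carry no information about the $w$-colours, and so the $K_p$ trick cannot work. Concretely, suppose every gadget copy is in a configuration guessed incorrectly by all its vertices (such configurations exist, since $D$ is not $p$-solvable). This forces $a_{ij}=\phi(\text{rest of that copy})$, but these values are fixed numbers determined entirely by the gadget configurations and have nothing to do with $x_{w_0},\dots,x_{w_{p-1}}$. Your guess for $w_k$ is $k-\sum_{j\neq k}a_{kj}$, which is therefore a fixed element of $[p]$; the adversary simply sets each $x_{w_k}$ to any other value. (Your proposed ``sum the $p$ equations'' argument does not rescue this: unlike the genuine $K_p$ strategy, the $p$ quantities $x_{w_k}+\sum_{j\neq k}a_{kj}$ are \emph{different} sums, not a single common sum forced to hit every residue.) Already for $p=2$: pick a wrong-guessed gadget configuration, read off $a_{01}$, then set $x_{w_0}=1-a_{01}$ and $x_{w_1}=-a_{01}$; no vertex guesses correctly. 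A secondary issue is that putting a full clique on $w_0,\dots,w_{p-1}$ makes the graph not oriented, though as your guesses do not use those edges this is moot.

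The paper's construction supplies exactly the missing idea: arcs go \emph{from} the special vertices \emph{into} the gadgets. It takes a transitive tournament on $p$ vertices $0,\dots,p-1$, and for each $i>j$ attaches a gadget $D_{i,j}$ with arcs from $i$ to every vertex of $D_{i,j}$ and from every vertex of $D_{i,j}$ to $j$. The gadget plays its function with its first coordinate shifted by $x_i$; hence if $D_{i,j}$ is guessed wrong then $x_i=\phi(x_{2_{i,j}},\dots,x_{n_{i,j}})-x_{1_{i,j}}$, a quantity $j$ can compute from what it sees. Vertex $j$ then runs the standard $K_p$ strategy using the true $x_k$ for $k<j$ (seen via the tournament) and these reconstructed $x_k$ for $k>j$, guessing that the total sum equals $j$. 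If all gadgets are guessed wrong, the reconstructions are correct and one tournament vertex must succeed. The essential difference from your plan is that each gadget is fed a special vertex's colour, so its failure pins down that colour rather than merely pinning down its own output.
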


\begin{proof}
Start with a transitive tournament on $p$ vertices with arcs $(i,j)$ for all $i<j$. For any ordered pair $(i,j)$ with $i > j$, add a gadget $D_{i,j}$ and arcs from $i$ to all vertices in $D_{i,j}$ and whence to $j$. This yields an oriented graph $G$ on $n \binom{p}{2} + p$ vertices; we claim that $G$ is $p$-solvable.

We denote the vertices of the original tournament as $0,1,\ldots,p-1$ and for each $i > j$, the vertices of the gadget $D_{i,j}$ are $1_{i,j}, \ldots, n_{i,j}$.

Let $f$ be the function on the gadget $D$ with corresponding $\phi$. The corresponding function $g$ for $G$ is as follows:
\begin{align*}
	g_j(x) &= -\sum_{k < j} x_k - \sum_{k > j} \left[\phi(x_{2_{k,j}}, \ldots, x_{n_{k,j}}) - x_{1_{k,j}} \right] + j,\\
		g_{1_{i,j}}(x) &= f_1(x_{2_{i,j}}, \ldots, x_{n_{i,j}}) - x_i,\\
	g_{v_{i,j}}(x) &= f_v(x_{1_{i,j}} + x_i, x_{2_{i,j}}, \ldots, x_{n_{i,j}}) \qquad v=2,\ldots,n.
\end{align*}

Suppose that $x$ is guessed incorrectly by all vertices. First, all vertices in $D_{i,j}$ guess wrong; we then have
\begin{align*}
	f_1(x_{2_{i,j}}, \ldots, x_{n_{i,j}}) &\ne x_{1_{i,j}} + x_i,\\
	f_v(x_{1_{i,j}} + x_i, x_{2_{i,j}}, \ldots, x_{n_{i,j}}) &\ne x_{v_{i,j}}, \qquad v=2,\ldots,n,
\end{align*}
hence 
$$
	 x_i = \phi(x_{2_{i,j}}, \ldots, x_{n_{i,j}}) - x_{1_{i,j}}.
$$
for all $i>j$.

Now, $j$ guesses wrong, therefore
$$
	\sum_{k < j} x_k + \sum_{k > j} \left[\phi(x_{2_{k,j}}, \ldots, x_{n_{k,j}}) - x_{1_{k,j}} \right] + x_j \ne j,
$$
which combined with the above, yields
$$
	\sum_{k \in [p]} x_k \ne j.
$$
Since this holds for all $j \in [p]$, this leads to a contradiction.
\end{proof}

\begin{proposition} \label{prop:gadget}
The following graphs are gadgets.
\begin{enumerate}
	\item The graph with a single vertex and no arc is a $2$-gadget.

	\item The directed cycle on three vertices is a $3$-gadget.

	\item The graph $D$ on six vertices in Figure \ref{fig:gadget} is a $4$-gadget.
\end{enumerate}
\end{proposition}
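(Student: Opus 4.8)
The plan is to verify the two clauses of Definition~\ref{def:gadget} for each of the three graphs in turn: that it is not $q$-solvable, and that it admits a $D$-function whose \emph{failure set} (the set of configurations misguessed by every vertex) is the graph of a function $x_1 = \phi(x_2,\dots,x_n)$. Throughout I will use the elementary fact that for \emph{any} $D$-function $f$ and \emph{any} vertex $v$, the set $C_v = \{x \in [q]^n : f_v(x) = x_v\}$ of configurations on which $v$ guesses correctly has size exactly $q^{n-1}$: once the hats on $N^-(v)$ are fixed there is a unique hat colour for $v$ landing in $C_v$, and the remaining coordinates are free. Hence $\sum_v |C_v| = n q^{n-1}$, and $f$ solves $D$ if and only if the sets $C_v$ cover $[q]^n$.

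Part~1 is immediate. With $n=1$ the unique vertex has empty in-neighbourhood, so every $2$-function is constant there; equivalently $|C_1| = 1 < 2$, so the graph is not $2$-solvable. Taking the constant guess $0$, the only misguessed configuration is $x_1 = 1$, which has the required form with $\phi$ the nullary constant $1$. (That $q=2$ is essential here: for $q \ge 3$ the misguessed set $\{x_1 \ne 0\}$ is not a single point.)

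For Part~2, label the vertices $1,2,3$ with arcs $1\to 2\to 3\to 1$, so each vertex sees exactly its unique in-neighbour and a $3$-function is just a triple $(g_1,g_2,g_3)$ of maps $[3]\to[3]$. For non-solvability I would observe that $|C_1|+|C_2|+|C_3| = 27 = |[3]^3|$, whereas $C_1 \cap C_2 = \{(g_1(c),g_2(g_1(c)),c) : c \in [3]\}$ has exactly $3$ elements (one per value $c$ of the third coordinate), so $|C_1 \cup C_2 \cup C_3| \le 27 - 3 < 27$ and some configuration is missed, for every choice of $(g_1,g_2,g_3)$. For the gadget function, let each vertex guess that its hat equals the one it sees, i.e.\ $f_2(x) = x_1$, $f_3(x) = x_2$, $f_1(x) = x_3$. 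Then a configuration is misguessed everywhere exactly when $x_1,x_2,x_3$ are pairwise distinct, i.e.\ form a permutation of $[3]$, and in that case $x_1$ is forced to be the unique element of $[3] \setminus \{x_2, x_3\}$ --- the desired $\phi$ (defined arbitrarily when $x_2 = x_3$).

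Part~3 carries the real content. I would first write down an explicit $D$-function $f$ on the six-vertex graph of Figure~\ref{fig:gadget}, most naturally built from affine maps over $[4]$ (identified with $\mathbb{Z}_4$, or with $\GF(4)$ so as to use all the field operations), and then compute its failure set by turning ``every vertex misguesses'' into a system of disequations in $x_1,\dots,x_6$; the thing to verify is that this system pins down $x_1$ as a function of $x_2,\dots,x_6$. The harder --- and, I expect, the only genuinely non-routine --- step is proving that the graph is not $4$-solvable: since $6 \ge 4$, the necessary condition $n \ge q$ extracted from the count $\sum_v|C_v| = n q^{n-1}$ is satisfied and hence gives no information, so one needs an argument tailored to the arc structure. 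The natural route is a refined inclusion--exclusion over the $C_v$: pick a sub-collection of vertices whose correct sets are forced to overlap substantially (mirroring the $C_1 \cap C_2$ computation of Part~2 but with more terms, parametrising the intersections along the directed paths of the graph) and deduce $|\bigcup_v C_v| < 4^6$; failing that, a finite case check over the guessing functions would settle it. I would expect the gadget function $f$ to have been chosen precisely so that this overlap count comes out cleanly.
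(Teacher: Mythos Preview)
Your treatment of Parts~1 and~2 is correct and essentially matches the paper's: the same gadget functions are used (each vertex copies its in-neighbour), and the paper packages the resulting $\phi$ for the directed triangle as the linear relation $x_1+x_2+x_3=0$ over $\mathbb{Z}_3$, which agrees with your ``missing element'' description on the failure set. Your inclusion--exclusion argument for non-$3$-solvability of the triangle is a pleasant addition; the paper in fact gives no non-solvability proof here (nor for Part~3), presumably because the gadget lemma only ever uses the $\phi$ clause.

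Part~3 is where there is a genuine gap. You say you would ``write down an explicit $D$-function \ldots\ built from affine maps over $[4]$'' and then check that its failure set has the required form, but you never do so, and this is precisely the substantive content of the paper's proof. The paper works over $\mathbb{Z}_4$ (not $\GF(4)$), taking $f(x)=Mx$ for a specific $\{0,\pm 1\}$-matrix $M$ conforming to the adjacency structure, so that the failure set is $\{x: Lx \text{ is nowhere zero}\}$ with $L=M-I$. The key observation is that the last three rows of $L$ are pairwise differences of the first three, $L_3=L_0-L_1$, $L_4=L_1-L_2$, $L_5=L_2-L_0$; hence ``$Lx$ nowhere zero'' forces $L_0x,L_1x,L_2x$ to be pairwise distinct and all nonzero, i.e.\ $\{L_0x,L_1x,L_2x\}=\{1,2,3\}$, and summing yields a single linear equation over $\mathbb{Z}_4$ in which one coordinate has a unit coefficient. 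Finding an $M$ with this row-difference structure is the whole point, and your outline does not identify this mechanism.

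You also invest most of your Part~3 discussion in the non-$4$-solvability of the six-vertex graph, anticipating a delicate inclusion--exclusion or case check. The paper simply omits this verification. So your emphasis is inverted relative to what the paper actually proves: the explicit gadget function is the heart of the argument, and that is exactly the piece you have left unwritten.
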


\begin{proof}
The first graph is trivial. For the directed cycle on vertices $1,2,3$ and arcs $(1,2),(2,3),(3,1)$, the function $f$ is
\begin{align*}
	f_1(x) &= x_3\\
	f_2(x) &= x_1\\
	f_3(x) &= x_2.
\end{align*}
Therefore, $x$ is not guessed correctly by any vertex if and only if $x_1$, $x_2$, and $x_3$ are all distinct. Thus we have $\{x_1,x_2,x_3\} = [3]$ and hence $x_1 + x_2 + x_3 = 0$.

For $D$, first remark that the transpose of its adjacency matrix (i.e., the matrix $A_D$ where $A_{i,j} = 1$ if and only if $(j,i)$ is an arc in $D$) is given by
$$
	A_D = \begin{pmatrix}
	0 & 1 & 0 & 0 & 0 & 1\\
	0 & 0 & 1 & 1 & 0 & 0\\
	1 & 0 & 0 & 0 & 1 & 0\\
	1 & 0 & 1 & 0 & 0 & 1\\
	1 & 1 & 0 & 1 & 0 & 0\\
	0 & 1 & 1 & 0 & 1 & 0
	\end{pmatrix}.
$$
For ease of presentation we shall write the hat configuration $x$ as a column vector; we let $f(x) = Mx$, where 
$$
	M = \begin{pmatrix}
	0 & -1 & 0 & 0 & 0 & 1\\
	0 & 0 & -1 & 1 & 0 & 0\\
	-1 & 0 & 0 & 0 & 1 & 0\\
	-1 & 0 & 1 & 0 & 0 & 1\\
	1 & -1 & 0 & 1 & 0 & 0\\
	0 & 1 & -1 & 0 & 1 & 0
	\end{pmatrix}.
$$
Then $x$ is guessed wrong by all vertices if and only if $Lx$ is nowhere zero, where 
$$
	L = \begin{pmatrix}
	-1 & -1 & 0 & 0 & 0 & 1\\
	0 & -1 & -1 & 1 & 0 & 0\\
	-1 & 0 & -1 & 0 & 1 & 0\\
	-1 & 0 & 1 & -1 & 0 & 1\\
	1 & -1 & 0 & 1 & -1 & 0\\
	0 & 1 & -1 & 0 & 1 & -1
	\end{pmatrix}.
$$
Denoting the rows of $L$ as $L_0,\ldots,L_5$, we see that $L_3 = L_0-L_1$, $L_4 = L_1 - L_2$, $L_5 = L_2-L_0$. Therefore, $x$ is not guessed right if and only if $L_0x$, $L_1x$, and $L_2x$ are all distinct and nonzero. Therefore, $\{L_0x, L_1x, L_2x\} = \{1,2,3\}$ and $x$ must satisfy 
$$
	2x_0 + 2x_1 + 2x_2 + x_3 + x_4 + x_5 = 2
$$
Renaming the vertices such that the fifth vertex becomes first, we obtain the desired equality.
\end{proof}

However, it is still unknown whether there exist gadgets for more than four colours.

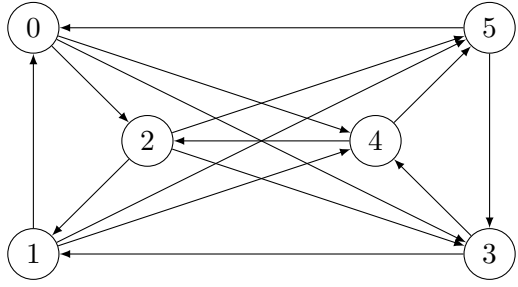
\begin{figure}[!t]
\centering
\begin{tikzpicture}
	\tikzstyle{every node}=[draw, shape = circle];
	
	\node (0) at (0,3) {0};
	\node (1) at (0,0) {1};
	\node (2) at (1.5,1.5) {2};
	\node (3) at (6,0) {3};
	\node (4) at (4.5,1.5) {4};
	\node (5) at (6,3) {5};
	
	\draw[-latex] (1) -- (0);
	\draw[-latex] (0) -- (2);
	\draw[-latex] (2) -- (1);
	
	\draw[-latex] (4) -- (5);
	\draw[-latex] (5) -- (3);
	\draw[-latex] (3) -- (4);
	
	\draw[-latex] (5) -- (0);
	\draw[-latex] (3) -- (1);
	\draw[-latex] (4) -- (2);
	
	\draw[-latex] (0) -- (3);
	\draw[-latex] (1) -- (4);
	\draw[-latex] (2) -- (5);
	
	\draw[-latex] (0) -- (4);
	\draw[-latex] (1) -- (5);
	\draw[-latex] (2) -- (3);
\end{tikzpicture}
\caption{The $4$-gadget $D$ in Proposition \ref{prop:gadget}} \label{fig:gadget}
\end{figure}

\section{Non-solvability results} \label{sec:non-solvability}



In Section~\ref{sec:undirected} we showed that a complete bipartite graph with one part of size $q-1$ was $q$-solvable.  In contrast, in this section we show that any bipartite graph that has a partition with one part of size at most $q-2$ is not $q$-solvable.  To do this we consider the following non-distinguishable set in $[q]^m$ (in other words, a subset of $[q]^m$ with remoteness $m$). Set $m = q-2$, and denote the words $w_a = (a,\ldots,a) \in [q]^m$ for all $a \in [q] \backslash \{0\}$, then $W = \{w_a: a \in [q]\backslash \{0\}\}$ is non-distinguishable. Indeed, for any $x \in [q]^m$, let $X = \{b \in [q]: x_i = b \, \mbox{for some} \, i\}$ denote the set of values taken by the coordinates of $x$, then $|X| \le m < |W|$ and hence there exists $a \in ([q] \backslash \{0\}) \backslash X$ and thus $d_H(x,w_a) = m$.

In fact, our proof applies to a larger class of graphs than bipartite graphs, defined as follows.

\begin{definition} \label{def:semibipartite}
We say a directed graph $D$ is $(m,s)$-{\em semibipartite} if its vertex set can be partitioned into $V = L \cup R$, where $|L|=m$, $|R| = s$ and $D[L]$ is an independent set and $D[R]$ is acyclic.
\end{definition}

\begin{theorem} \label{th:semibipartite}
Any $(m,s)$-semibipartite graph is not $(m+2)$-solvable.
\end{theorem}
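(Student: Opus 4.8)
The plan is to assume for contradiction that some $D$-function $f$ solves $D$ over $[q]$ with $q=m+2$, and to build a configuration on which \emph{every} vertex guesses incorrectly. We will force the hats on $L$ to lie in the non-distinguishable set $W=\{w_a : a\in[q]\setminus\{0\}\}$ of constant vectors introduced above (which has remoteness $m$ precisely because $m=q-2$), and we will choose the hats on $R$ to be a single vector $y\in[q]^R$ that simultaneously defeats every vertex of $R$ for \emph{all} choices $z\in W$ on $L$.

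First I would fix a topological ordering $r_1,\dots,r_s$ of $R$, which exists since $D[R]$ is acyclic, so that $N^-(r_j)\cap R\subseteq\{r_1,\dots,r_{j-1}\}$. Then I define $y$ inductively: having chosen $y_{r_1},\dots,y_{r_{j-1}}$, look at the set of guesses $r_j$ could make as $z$ ranges over $W$, namely $G_j=\{\,f_{r_j}\big((w_a,y)_{N^-(r_j)}\big) : a\in[q]\setminus\{0\}\,\}$; this is well defined because $N^-(r_j)\subseteq L\cup\{r_1,\dots,r_{j-1}\}$ and those coordinates are already determined by $w_a$ and the chosen $y_{r_i}$. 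The crucial point is that under the configuration $w_a$ every $L$-neighbour of $r_j$ shows the single value $a$, so $G_j$ is indexed by the $q-1$ values of $a$ and hence $|G_j|\le q-1<q$; I may therefore pick $y_{r_j}\in[q]\setminus G_j$. By construction, for every $z\in W$ and every $j$, vertex $r_j$ guesses $f_{r_j}((z,y)_{N^-(r_j)})\in G_j\ne y_{r_j}$, i.e.\ incorrectly, in the configuration $(z,y)$.

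Now I turn to $L$. Since $D[L]$ is an independent set, each $v\in L$ has $N^-(v)\subseteq R$, so with $y$ fixed its guess $a_v:=f_v(y_{N^-(v)})$ does not depend on the hats of $L$ at all; set $a=(a_v)_{v\in L}\in[q]^m$. Because $W$ is not distinguishable, there is some $w\in W$ with $d_H(a,w)=m$, that is $w_v\ne a_v$ for all $v\in L$. Taking the configuration $(w,y)$: every vertex of $R$ guesses incorrectly (the construction of $y$ was valid for all $z\in W$, in particular $z=w$), and every $v\in L$ guesses $a_v\ne w_v$, again incorrectly. This contradicts the assumption that $f$ solves $D$, so $D$ is not $(m+2)$-solvable.

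The only substantive ingredient is the inductive construction of $y$, and the one idea that makes it work is that restricting the hats of $L$ to the constant vectors in $W$ collapses the number of distinct guesses available to each $r_j$ down to at most $q-1$, leaving a free symbol for $y_{r_j}$; everything else is routine bookkeeping with the topological order and with the definition of (non-)distinguishability. I expect the main thing to be careful about is avoiding circular dependencies while defining $y$ — which is exactly why we process $R$ in topological order and why the hats of $L$ never need to be pinned down until the final step.
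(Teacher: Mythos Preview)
Your proof is correct and follows exactly the same approach as the paper: restrict the $L$-configuration to the non-distinguishable set $W$ of $q-1$ constant vectors, build $y$ inductively along a topological order of $R$ (each $G_j$ having size at most $q-1$, so a free symbol exists), then pick $w\in W$ defeating all of $L$. The paper's write-up is terser---it does not name the topological ordering explicitly and finds the final colour $b$ by a direct pigeonhole on $[q]\setminus\{f_{l_1}(y),\dots,f_{l_m}(y)\}$ rather than quoting non-distinguishability---but the argument is identical.
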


\begin{proof}
Let $q = m+2$ and denote the vertices of $R$ as $r_1,\ldots,r_s$. Let $y \in [q]^s$ such that 
\begin{align*}
	y_1 &\notin \{f_{r_1}(w_a) : a \in [q]\}\\
	y_2 &\notin \{f_{r_2}(w_a,y_1) : a \in [q]\}\\
	&\vdots\\
	y_s &\notin \{f_{r_s}(w_a,y_1,\ldots,y_{s-1}) : a \in [q]\};
\end{align*}
such $y$ exists for each set on the right hand side has cardinality at most $|W| = q-1$. Furthermore, let $b \in [q] \backslash \{f_{l_1}(y), \ldots, f_{l_m}(y)\}$ (where $l_1,\ldots,l_m$ are the vertices of $L$), then all vertices guess $(w_b,y)$ incorrectly.
\end{proof}

This theorem is best possible, for Theorem \ref{th:bipartite} indicates that there are $q$-solvable bipartite graphs with left part of size $q-1$.

\begin{corollary} \label{cor:bipartite}
The complete bipartite graph $K_{m,n}$ is not $(m+2)$-solvable.
\end{corollary}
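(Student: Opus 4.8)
The plan is to recognise Corollary~\ref{cor:bipartite} as an immediate specialisation of Theorem~\ref{th:semibipartite}: I would simply exhibit $K_{m,n}$ as an $(m,n)$-semibipartite graph and then invoke the theorem.

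Concretely, fix the bipartition $V(K_{m,n}) = L \cup R$ with $|L| = m$ and $|R| = n$. Since $K_{m,n}$ is bipartite with colour classes $L$ and $R$, every edge joins $L$ to $R$; in particular the induced subgraph $D[L]$ has no edges, so it is an independent set, and likewise $D[R]$ has no edges and is therefore (trivially) acyclic. Thus $K_{m,n}$ satisfies Definition~\ref{def:semibipartite} with parameters $m$ and $s = n$, and Theorem~\ref{th:semibipartite} yields at once that $K_{m,n}$ is not $(m+2)$-solvable.

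There is essentially no obstacle here: the only fact used is that both colour classes of a complete bipartite graph are independent, hence in particular acyclic, which allows one class to play the role of the independent side $L$ and the other the role of the acyclic side $R$ in the semibipartite decomposition. (If desired, running the same argument with the two classes interchanged gives the slightly stronger statement that $K_{m,n}$ is not $(\min(m,n)+2)$-solvable, but the corollary as stated follows directly from the decomposition above, and the companion remark after Theorem~\ref{th:semibipartite} shows this bound is best possible.)
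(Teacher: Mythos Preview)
Your proposal is correct and is exactly the intended deduction: the paper states the corollary immediately after Theorem~\ref{th:semibipartite} without proof, precisely because $K_{m,n}$ is trivially $(m,n)$-semibipartite (both parts are independent, hence in particular the right part is acyclic). Your parenthetical remark about $\min(m,n)+2$ is a fine observation but not needed for the corollary as stated.
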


\begin{corollary} \label{cor:k=1}
Any graph with a minimum vertex feedback set of cardinality one is $q$-solvable if and only if $q=2$.
\end{corollary}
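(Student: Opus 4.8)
The plan is to prove both directions, using Theorem~\ref{th:semibipartite} for the harder (``only if'') direction and a direct parity argument for the other.

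For the reverse direction, suppose $q=2$. Since the minimum feedback vertex set of $D$ has cardinality one, and not zero, $D$ is not acyclic and hence contains a directed cycle $C$, say on vertices $u_0,u_1,\dots,u_{k-1}$ with arcs $(u_i,u_{i+1})$, indices read modulo $k$. First I would check that $C$ itself is $2$-solvable: let $u_i$ guess $f_{u_i}(x)=x_{u_{i-1}}+c_i \pmod 2$ for constants $c_i\in\{0,1\}$ to be chosen. If every vertex of $C$ guesses incorrectly then $x_{u_i}-x_{u_{i-1}}\equiv c_i+1\pmod 2$ for all $i$; summing these relations around the cycle forces $\sum_i c_i + k\equiv 0\pmod 2$, so choosing the $c_i$ with $\sum_i c_i \not\equiv k\pmod 2$ makes this impossible, and $C$ is $2$-solvable. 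Since $C$ is a subgraph of $D$, the graph $D$ is then $2$-solvable as well: the vertices on $C$ follow the strategy above (each $u_i$ sees $u_{i-1}$ in $D$, as the arc $(u_{i-1},u_i)$ lies in $D$), while every other vertex of $D$ guesses $0$; for any configuration, some vertex of $C$ is correct.

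For the forward direction, suppose $q\ge 3$; I claim $D$ is not $q$-solvable. Let $v$ be a vertex whose removal makes $D$ acyclic, which exists by hypothesis. Put $L=\{v\}$ and $R=V(D)\setminus\{v\}$: then $D[L]$ is trivially an independent set (no loops) and $D[R]=D-v$ is acyclic, so $D$ is $(1,|V(D)|-1)$-semibipartite in the sense of Definition~\ref{def:semibipartite}. By Theorem~\ref{th:semibipartite}, $D$ is not $(1+2)=3$-solvable, and since $q$-solvability implies $(q-1)$-solvability, $D$ is not $q$-solvable for any $q\ge 3$. Combining the two directions yields the corollary.

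The only genuine content is the parity argument showing a directed cycle is $2$-solvable together with the (routine) observation that $q$-solvability is inherited from a subgraph; the rest is a direct application of Theorem~\ref{th:semibipartite}. I do not expect a serious obstacle here: the points to be careful about are the orientation of the arcs in $C$ (so that each $u_i$ really sees $u_{i-1}$) and the fact that the feedback vertex $v$ may itself lie on every directed cycle of $D$ — which is precisely the situation that the semibipartite decomposition with $|L|=1$ is designed to handle.
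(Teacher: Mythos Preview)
Your proof is correct and follows essentially the same route as the paper: the non-$3$-solvability comes from applying Theorem~\ref{th:semibipartite} with $L=\{v\}$ and $R=V\setminus\{v\}$, and the $2$-solvability comes from showing that a directed cycle is $2$-solvable via a parity/telescoping argument (the paper just fixes the constants $c_1=0$, $c_i=1$ for $i\ge 2$ rather than leaving them general). The subgraph-inheritance and monotonicity in $q$ you invoke are exactly what the paper uses implicitly.
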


\begin{proof}
By Theorem \ref{th:semibipartite}, such a graph is not $3$-solvable. Conversely, it is not acyclic, hence it contains a directed cycle as a subgraph: let us prove that the directed cycle $C_n$ on $n$ vertices is $2$-solvable. Let the function be $f_1(x) = x_n$ and $f_i(x) = x_{i-1} + 1$ for $2 \le i \le n$, then $x$ is guessed incorrectly by all vertices if and only if $x_1 = x_2 = \ldots = x_n = x_1 +1$, which is clearly impossible.
\end{proof}

\begin{theorem} \label{th:bound2}
Let $D$ be a directed graph on $n$ vertices with an acyclic induced subgraph of size $I$. If 
$$
	(n-I) \left(\frac{q}{q-1}\right)^I < q,
$$
then $D$ is not $q$-solvable.
\end{theorem}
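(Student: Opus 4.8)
The plan is to run a counting (or equivalently probabilistic) argument. Fix any $D$-function $f$ over $[q]$. Let $A \subseteq V(D)$ be a set of $I$ vertices inducing an acyclic subgraph, and let $B = V(D) \setminus A$ be the remaining $n-I$ vertices. I want to show that if the stated inequality holds, then there exists a configuration $x \in [q]^n$ on which every vertex guesses incorrectly, i.e. $f_v(x) \neq x_v$ for all $v$; since $f$ was arbitrary, this proves $D$ is not $q$-solvable.

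The key idea is to first handle $B$ by a crude union bound, then handle $A$ exactly using acyclicity. For a uniformly random $x \in [q]^n$, each vertex $v \in B$ satisfies $f_v(x) = x_v$ with probability exactly $1/q$ (since $x_v$ is independent of $x_{N^-(v)}$ and uniform), so the expected number of vertices in $B$ guessing correctly is $(n-I)/q$. But I do not want to bound over all of $[q]^n$ uniformly; instead I restrict attention to those configurations on which $A$ is guaranteed to guess wrong. Concretely: order $A = \{a_1, \ldots, a_I\}$ so that it is a topological order of $D[A]$, meaning each $a_k$ has no in-neighbours in $A$ among $\{a_{k+1},\ldots,a_I\}$ — wait, I need the reverse: each $a_k$'s in-neighbours in $A$ lie in $\{a_1,\dots,a_{k-1}\}$. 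Then, for any fixed values on $B$ and any fixed $x_{a_1},\dots,x_{a_{k-1}}$, the value $f_{a_k}(x)$ is determined, so I can choose $x_{a_k} \in [q]$ to avoid it; this leaves $q-1$ choices for $x_{a_k}$ at each step. Thus the set $G$ of configurations on which every vertex of $A$ guesses wrong has size $|G| \ge q^{n-I}(q-1)^I$ (free choice on $B$, then $q-1$ choices per vertex of $A$ in topological order).

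Now restrict the uniform measure to $G$ and bound the number of vertices in $B$ that guess correctly. For $v \in B$, the event $f_v(x) = x_v$ restricted to $x \in G$: I claim its probability is at most $\tfrac{1}{q-1}\left(\tfrac{q}{q-1}\right)^{I-1}$ — or more simply, count directly. For each $v \in B$, the number of $x \in G$ with $f_v(x) = x_v$ is at most $q^{n-I-1} \cdot 1 \cdot (q-1)^I \cdot \frac{q}{q-1}$? Let me instead argue cleanly: the total number of $x \in [q]^n$ with $f_v(x)=x_v$ is $q^{n-1}$, and intersecting with the acyclic constraints on $A$ only multiplies by at most $(q-1)^I/q^I \cdot q^I/\ldots$ — cleanest is to reuse the topological-order counting on $G$ with the extra constraint $x_v = f_v(x)$ imposed. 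Walking through $B$ first (with one coordinate $x_v$ now constrained rather than free, contributing a factor of at most $1$ instead of $q$, but possibly $f_v$ depends on $A$-coordinates too) and then through $A$ in topological order (still $\le q-1$ choices each, since we only ever need to avoid one value), one gets $\#\{x \in G : f_v(x) = x_v\} \le q^{n-I-1}(q-1)^I \cdot \frac{q^I}{(q-1)^I}$? This is exactly where care is needed. The right bound to aim for is
\[
\#\{x \in G : f_v(x)=x_v\} \;\le\; \frac{|G|}{q-1},
\]
which would give, by union over $v \in B$, that the number of configurations in $G$ on which some vertex of $B$ also guesses correctly is at most $\frac{(n-I)}{q-1}|G|$; but comparing with the hypothesis requires the factor $\left(\tfrac{q}{q-1}\right)^I$, so the correct target is
\[
\#\{x \in G : f_v(x)=x_v\} \;\le\; (n-I)\left(\tfrac{q}{q-1}\right)^{I}\cdot\frac{q^{n-I}(q-1)^I}{q} \cdot \frac{1}{n-I},
\]
i.e. one shows the count of bad configurations is at most $(n-I)(q/(q-1))^I q^{n-I-1}(q-1)^I / 1$, and since $|G| \ge q^{n-I}(q-1)^I$, whenever $(n-I)(q/(q-1))^I < q$ this is strictly less than $|G|$, leaving a configuration in $G$ on which \emph{no} vertex of $B$ guesses correctly either — and we are done.

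The main obstacle is getting the per-vertex count for $v \in B$ right: one must recount $G$ with the constraint $x_v = f_v(x)$ by processing $B$ first (where $x_v$ is now pinned, losing a factor $q$) and then $A$ in topological order. The subtlety is that $f_v$ may depend on coordinates in $A$, so pinning $x_v = f_v(x)$ is not simply "fix one coordinate freely"; rather, after fixing all of $B \setminus \{v\}$ freely ($q^{n-I-1}$ ways) and all of $A$ (at most $q^I$ ways, ignoring the acyclic savings for now), the equation $x_v = f_v(x)$ has at most one solution in $x_v$ — giving $\le q^{n-I-1} q^I$ — but we need to intersect with $G$'s acyclic constraints. Reorganising the count so the $A$-coordinates contribute $(q-1)^I$ rather than $q^I$ while the $x_v = f_v(x)$ constraint still contributes a factor $1$ is the delicate bookkeeping step; I would do this by choosing the topological order on $A$ and inserting $v$ and the rest of $B$ at the front, checking that at the step where $x_v$ is fixed we lose a factor $q$ (not $q-1$), and comparing ratios. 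Everything else — the existence of the topological order, the reduction to "fix $f$ arbitrary", the final inequality chase — is routine.
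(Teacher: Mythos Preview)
Your approach is correct and essentially the same as the paper's: count the set $G$ of configurations on which all of $A$ guesses wrong (size exactly $q^{n-I}(q-1)^I$, via a topological order on $D[A]$), then bound the number of configurations in $G$ on which some $v\in B$ guesses correctly by $(n-I)q^{n-1}$, and compare. The ``delicate bookkeeping step'' you worry about is a red herring: you do not need to intersect $\{x: f_v(x)=x_v\}$ with $G$ at all --- the trivial bound $|\{x\in[q]^n: f_v(x)=x_v\}| = q^{n-1}$ (which you already compute) is enough, since restricting to $G$ can only shrink the set; summing over $v\in B$ gives $(n-I)q^{n-1}$, and this is $<|G|$ precisely when $(n-I)(q/(q-1))^I<q$.
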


\begin{proof}
We denote the set of vertices inducing an acyclic subgraph of cardinality $I$ as $A$; we also denote a guessing function as $f$. Let $x \in [q]^I$ be the hat assignment on $A$ and $y \in [q]^{n-I}$ be the assignment on the rest of the vertices. For each choice of $y$, denote by $S_d(y)$ the set of  choices for $x$ such that exactly $d$ vertices in $A$ guess correctly for all $0 \le d \le I$. It is easy to prove by induction on $I$ that $N_d := |S_d(y)| =  \binom{I}{d}(q-1)^{I-d}$. We shall consider the situation when $x \in S_0(y)$, i.e., when no vertex in $A$ guesses correctly; given $y$, there are $N_0 = (q-1)^I$ such assignments.

For any $y$, let $G$ denote the number of times the vertices in $A$ guess their colours correctly when $x \notin S_0(y)$:
$$
	G := \sum_{x \in [q]^I} \sum_{i = 1}^I {\bf 1}\{f_{a_i}(x,y) = x_i \} = \sum_{d=1}^I d N_d = Iq^{I-1}.
$$

The total number of correct guesses, over all assignments $(x,y)$, is of course equal to $nq^{n-1}$. Therefore, there are at most 
$$
	H := nq^{n-1} - q^{n-I}G = (n-I)q^{n-1}
$$ 
correct guesses over the whole graph for any $(x,y)$ where $x \in S_0(y)$. On average, such an assignment is guessed correctly
$$
	\frac{H}{q^{n-I}N_0} = \frac{(n-I)q^{I-1}}{(q-1)^I} < 1
$$
times, and hence one hat assignment is never guessed correctly.
\end{proof}

\begin{corollary} \label{cor:bound2}
A graph with an acyclic induced subgraph of size $I$ is $q$-solvable only if it has at least $I + q \left(1 - \frac{1}{q} \right)^I$ vertices in total.
\end{corollary}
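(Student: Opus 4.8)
The plan is to derive Corollary~\ref{cor:bound2} as a direct consequence of Theorem~\ref{th:bound2} by taking the contrapositive and solving the stated inequality for $n$. Theorem~\ref{th:bound2} says that if $(n-I)\left(\frac{q}{q-1}\right)^I < q$ then $D$ is not $q$-solvable; equivalently, if $D$ \emph{is} $q$-solvable (and has an acyclic induced subgraph of size $I$), then we must have $(n-I)\left(\frac{q}{q-1}\right)^I \ge q$.

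First I would rearrange this inequality. Multiplying through by $\left(\frac{q-1}{q}\right)^I = \left(1 - \frac{1}{q}\right)^I$ gives $n - I \ge q\left(1 - \frac{1}{q}\right)^I$, and hence $n \ge I + q\left(1 - \frac{1}{q}\right)^I$, which is exactly the claimed bound on the total number of vertices. So the argument is essentially one line of algebra once Theorem~\ref{th:bound2} is in hand; there is no real obstacle. I would write it out as: suppose $D$ is $q$-solvable and has an acyclic induced subgraph of size $I$; by the contrapositive of Theorem~\ref{th:bound2} we cannot have $(n-I)\left(\frac{q}{q-1}\right)^I < q$, so $(n-I)\left(\frac{q}{q-1}\right)^I \ge q$, and multiplying both sides by $\left(1-\frac{1}{q}\right)^I$ and rearranging yields $n \ge I + q\left(1-\frac{1}{q}\right)^I$.

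The only point worth a moment's care is making sure the inequality in Theorem~\ref{th:bound2} is strict, so that its negation is the non-strict inequality $(n-I)\left(\frac{q}{q-1}\right)^I \ge q$ rather than a strict one — but the theorem is indeed stated with a strict $<$, so the contrapositive gives $\ge$ cleanly, matching the $\ge$ in the corollary. If desired, one could also note that since $n$ is an integer the bound could be stated with a ceiling, but the non-integer form as written is already correct and there is nothing further to prove.
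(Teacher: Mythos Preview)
Your proposal is correct and is exactly the intended argument: the paper states the corollary without proof because it is the immediate contrapositive of Theorem~\ref{th:bound2} followed by the one-line rearrangement you describe.
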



\begin{corollary} \label{cor:alpha}
If a graph on $n$ vertices has an acyclic induced subgraph of cardinality at least $n/2$, then it is $q$-solvable only if $n \ge 2\alpha (q-1)$, where $\alpha \sim 0.5675$ satisfies $\alpha + \log \alpha = 0$.
\end{corollary}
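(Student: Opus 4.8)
The plan is to derive Corollary~\ref{cor:alpha} directly from Corollary~\ref{cor:bound2} by specialising the bound to the regime $I \ge n/2$ and then performing the asymptotic analysis. First I would set $I = n/2$ (the worst case for the bound, since the right-hand side $I + q(1-1/q)^I$ is increasing in $I$ for the relevant range, so a larger acyclic subgraph only helps — I should double-check monotonicity, but for $I$ up to roughly $q$ the linear term dominates the derivative against the exponentially-decaying term, so the minimum over admissible $I \ge n/2$ is attained at $I = n/2$). Corollary~\ref{cor:bound2} then gives the necessary condition
\[
    n \ge \frac{n}{2} + q\left(1 - \frac{1}{q}\right)^{n/2},
\]
i.e. $n/2 \ge q(1-1/q)^{n/2}$, equivalently $n \ge 2q(1-1/q)^{n/2}$.

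Next I would pass to the asymptotic form. Writing $(1-1/q)^{n/2} = \exp\!\big(\tfrac{n}{2}\log(1-1/q)\big)$ and using $\log(1-1/q) = -1/q + O(1/q^2)$, the condition becomes, to leading order, $n \gtrsim 2q\,e^{-n/(2q)}$. It is natural to substitute $t = n/(2q)$, so the inequality reads $t \ge e^{-t}$, i.e. $t + \log t \ge 0$ is \emph{forced} once $t \ge \alpha$ where $\alpha + \log\alpha = 0$; conversely for $t < \alpha$ we would have $t < e^{-t}$, contradicting $n \ge 2q(1-1/q)^{n/2}$. Hence $t \ge \alpha + o(1)$, which is exactly $n \ge 2\alpha q (1+o(1))$; absorbing the lower-order correction into the stated $2\alpha(q-1)$ (note $2\alpha(q-1) = 2\alpha q - 2\alpha < 2\alpha q$, so the slightly weaker bound $n \ge 2\alpha(q-1)$ comfortably accommodates the error terms from replacing $\log(1-1/q)$ by $-1/q$) gives the claim. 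The numerical value $\alpha \approx 0.5675$ is just the solution of $\alpha e^{\alpha} = 1$, i.e. $\alpha = W(1)$, the Omega constant.

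The main obstacle is handling the error terms carefully enough to land on the clean statement $n \ge 2\alpha(q-1)$ rather than merely $n \ge (2\alpha - o(1))q$. The cleanest route is probably to avoid asymptotics in the final inequality altogether: start from $n \ge 2q(1-1/q)^{n/2}$, observe that $(1-1/q)^{q-1} \ge e^{-1}$ for all $q \ge 2$ (a standard elementary inequality), and also $(1-1/q)^{n/2} \ge (1-1/q)^{n/2}$ trivially — then argue that if $n < 2\alpha(q-1)$ we would get $n \ge 2q(1-1/q)^{\alpha(q-1)} \ge 2q\big((1-1/q)^{q-1}\big)^{\alpha} \ge 2q\,e^{-\alpha} = 2\alpha q > 2\alpha(q-1) > n$, a contradiction. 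This makes the whole argument a short chain of elementary inequalities with no genuine limit needed, so I would present it that way. The only thing to verify is the monotonicity remark used to reduce to $I = n/2$, and the elementary bound $(1-1/q)^{q-1} \ge e^{-1}$, both of which are routine.
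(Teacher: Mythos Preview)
Your ``cleanest route'' at the end is correct and is essentially the paper's own argument: the paper also assumes $n < 2\alpha(q-1)$, sets $i = n/(2q)$, and uses the elementary inequality $\log(1+\tfrac{1}{q-1}) < \tfrac{1}{q-1}$ (which is exactly your $(1-1/q)^{q-1}\ge e^{-1}$ in multiplicative form) to obtain $\tfrac{n}{2}(\tfrac{q}{q-1})^{n/2} < q$ and then invokes Theorem~\ref{th:bound2} with $I=n/2$. Your monotonicity worry is unnecessary: since any subset of an acyclic induced subgraph is again acyclic, one may simply take an acyclic induced subgraph of size exactly $\lceil n/2\rceil$ and apply the bound directly, with no optimisation over $I$ required.
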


\begin{proof}
Suppose $n < 2\alpha(q-1)$, and let $i = n/(2q) < \alpha(q-1)/q$, then $\log i + i\frac{q}{q-1} < \log \frac{q-1}{q}$ and hence
\begin{align*}
	0 &> \log i + i\frac{q}{q-1}\\
	&> \log i + iq \log \left(1 + \frac{1}{q-1} \right)\\
	1 &> i \left(1 + \frac{1}{q-1} \right)^{iq}\\
	q &> \frac{n}{2} \left( \frac{q}{q-1}\right)^{\frac{n}{2}},
\end{align*}
which, by Theorem \ref{th:bound2}, shows that the graph is not $q$-solvable.
\end{proof}

\section{Even cycles} \label{sec:even_cycles}

In this section we show that a cycle whose length is a multiple of 6 is 3-solvable.  In fact, we can define guessing functions for any even cycle which have the property that at most 3 hat configurations are not guessed correctly by any vertex.

For $n>1$, let $C_{2n}$ be the cycle of length $2n$ and let $V = \{v_1,v_2,\dots,v_n\}$ and $W = \{w_1,w_2,\dots,w_n\}$ be a partition of the vertices of $C_{2n}$ into independent sets, with $v_i$ adjacent to $w_i$ and $w_{i-1}$ for all $i=1,\dots,n$ (index arithmetic taken modulo $n$).  Denote the hat colour of $v_i$ by $x_i$ and its guessing function by $f_i$.  Similarly, for $w_i$, denote its hat colour by $y_i$ and its guessing function by $g_i$.  We define the guessing functions to be
\begin{align}
f_i(y_{i-1},y_i) &=
\begin{cases}
y_i-1 & \text{if $y_i \neq y_{i-1}+1$,}\\
y_i+1 & \text{if $y_i = y_{i-1}+1$,}\\
\end{cases}\quad \text{ for $i \neq 1$};
\\
f_1(y_n,y_1) &=
\begin{cases}
y_1-1 & \text{if $y_1 \neq y_n-1$,}\\
y_1+1 & \text{if $y_1 = y_n-1$,}\\
\end{cases}
\\
g_i(x_i,x_{i+1}) &=
\begin{cases}
x_i & \text{if $x_i \neq x_{i+1}+1$,}\\
x_i-1 & \text{if $x_i = x_{i+1}+1$,}\\
\end{cases}\quad \text{ for $i \neq n$};
\\
g_n(x_n,x_1) &=
\begin{cases}
x_n & \text{if $x_n \neq x_1$,}\\
x_n-1 & \text{if $x_n = x_1$.}\\
\end{cases}
\end{align}
Graphically, we have
\begin{align*}
f_i:\: y_i\;
&\begin{tabular}{|ccc|}
\hline
1&0&1\\
2&0&0\\
2&2&1\\
\hline
\end{tabular}
&
f_1:\: y_1\;
&
\begin{tabular}{|ccc|}
\hline
0&1&1\\
0&0&2\\
2&1&2\\
\hline
\end{tabular}
\\
&\;\quad y_{i-1} & &\qquad y_n
\\
\\
g_i:\: x_{i+1}\;
&
\begin{tabular}{|ccc|}
\hline
2&1&2\\
0&1&1\\
0&0&2\\
\hline
\end{tabular}
&
g_n:\: x_1\;
&
\begin{tabular}{|ccc|}
\hline
0&1&1\\
0&0&2\\
2&1&2\\
\hline
\end{tabular}
\\
&\qquad x_i & &\qquad x_n
\end{align*}
the sets $f_i^{-1}(x_i)$ and $g_i^{-1}(y_i)$ forming L-shaped regions of $[3]^n$.
\begin{theorem}
The cycle $C_{2n}$ is 3-solvable for $n \equiv 0 \pmod 3$.
Using the guessing functions as defined above, when $n \equiv 1 \pmod 3$, the only configurations $(x,y)$ that all vertices guess incorrectly are
\[
x = (a,a+2,a+1,a,\dots,a), y = (a,a+2,a+1,a,\dots,a) \text{ for some $a \in [3]$},
\]
and when $n \equiv 2 \pmod 3$, the only configurations $(x,y)$ that all vertices guess incorrectly are
\[
x = (a+2,a,a+1,a+2,\dots,a), y = (a,a+1,a+2,a,\dots,a+1) \text{ for some $a \in [3]$}.
\]
\end{theorem}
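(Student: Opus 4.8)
The plan is to recast the whole analysis in terms of the single sequence $d_i := x_i - y_i \in [3]$ (all arithmetic mod $3$, all indices mod $n$). Alongside $d_i$ I would introduce the ``jumps'' $a_i := x_i - x_{i+1}$ and $b_i := y_i - y_{i-1}$, which satisfy $\sum_i a_i = \sum_i b_i = 0$ together with the identity $d_{i+1} - d_i = -(a_i + b_{i+1})$. Expanding the piecewise guessing functions, the guess of $w_i$ is $x_i$ unless its exceptional branch is active (in which case it is $x_i - 1$), and the guess of $v_i$ is $y_i - 1$ unless its exceptional branch is active (in which case it is $y_i + 1$). Write $P_i$ for the predicate ``$w_i$'s exceptional branch is active'' (i.e.\ $x_i = x_{i+1}+1$ for $i \neq n$, and $x_n = x_1$ for $i=n$) and $Q_i$ for ``$v_i$'s exceptional branch is active'' (i.e.\ $y_i = y_{i-1}+1$ for $i\neq 1$, and $y_1 = y_n - 1$ for $i=1$); note $P_i$ is a condition on $a_i$ and $Q_i$ a condition on $b_i$.

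The first step is to establish the dictionary: $(x,y)$ is guessed incorrectly by every vertex if and only if, for every $i$,
\[
d_i = 0 \;\Rightarrow\; P_i, \qquad d_i = 1 \;\Rightarrow\; (\neg P_i \text{ and } \neg Q_i), \qquad d_i = 2 \;\Rightarrow\; Q_i .
\]
This follows by expanding, at each $i$, the four combinations of $(P_i,Q_i)$: for instance if $\neg P_i$ and $\neg Q_i$ then $w_i$ is wrong iff $d_i \neq 0$ and $v_i$ is wrong iff $d_i \neq 2$, so both fail iff $d_i = 1$; the other three combinations are handled identically. The point worth checking carefully is that the two exceptional vertices $v_1$ and $w_n$ yield exactly the same local condition as the generic vertices (only the predicates $P_n$ and $Q_1$ take a different concrete form), so no extra cases appear.

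Next I would read off the admissible transitions of $(d_i)$. Since $d_i = 0$ forces $P_i$ and $d_i = 1$ forces $\neg P_i$ (both restricting $a_i$), while $d_{i+1} = 1$ or $2$ restricts $b_{i+1}$ via $Q_{i+1}$, and $a_i + b_{i+1} = d_i - d_{i+1}$, a short check of the nine cases for $(d_i,d_{i+1})$ shows the only possibilities are $0\to 0$, $1\to 0$, $1\to 1$, $2\to 0$, $2\to 1$, $2\to 2$ — and the same list holds for the one exceptional transition $i=n$ (where $P_n$, $Q_1$ are the non-generic predicates). As the only transition landing on $2$ is $2\to 2$ and the only ones landing on $1$ are $1\to 1$ and $2\to 1$, any cyclic sequence obeying these rules must be constant: $d\equiv 0$, $d\equiv 1$, or $d\equiv 2$. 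I then resolve the three cases with the dictionary and the identity $d_{i+1}-d_i = -(a_i+b_{i+1})$. If $d\equiv 0$ then all $P_i$ hold, forcing $a = (1,\dots,1,0)$; this has $\sum a_i = 0$ exactly when $n\equiv 1\pmod 3$, and then $y = x$ with $x_i = a-(i-1)$, which is precisely the listed family. Symmetrically $d\equiv 2$ forces all $Q_i$, hence $b = (2,1,\dots,1)$, realisable exactly when $n\equiv 2\pmod 3$, giving the other listed family (with $y_i = a+(i-1)$ and $x_i = y_i - 1$). Finally $d\equiv 1$ forces $a = (0,\dots,0,2)$ with $\sum a_i = 2\neq 0$, so it never occurs. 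In particular, when $n\equiv 0\pmod 3$ no constant case is realisable, so there is no bad configuration and $C_{2n}$ is $3$-solvable; and since the dictionary is an ``if and only if'', the configurations exhibited above genuinely are bad, so the stated lists are complete.

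I expect the main obstacle to be the first step: translating the branches of $f_i$ and $g_i$ into the clean $d_i$-condition, and in particular verifying that the boundary vertices $v_1$ and $w_n$ do not break the uniformity, all while keeping the $\bmod 3$ bookkeeping straight. Once that dictionary is in place, the transition check and the resolution of the three constant cases are finite verifications whose shape is completely dictated by the identity $d_{i+1}-d_i = -(a_i+b_{i+1})$.
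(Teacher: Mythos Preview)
Your proposal is correct, and the approach is genuinely different from the paper's.

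The paper argues directly from the piecewise definitions: it proves three implications $A_i$, $B_i$, $C_i$ (of the form ``if $x_i = y_i - 1$ then $y_i = y_{i-1}+1$ and $x_{i-1} = y_{i-1}-1$'', etc.), then does a three-way case split on the value of $x_n - y_n$ and chases each chain of implications around the cycle to reach a congruence on $n$. You instead introduce the difference sequence $d_i = x_i - y_i$ and prove a clean local equivalence: the pair $(v_i,w_i)$ both guess wrong if and only if $d_i = 0 \Rightarrow P_i$, $d_i = 1 \Rightarrow \neg P_i \wedge \neg Q_i$, $d_i = 2 \Rightarrow Q_i$. From this you extract a transition graph on $\{0,1,2\}$ whose only cycles are the three self-loops, forcing $d$ constant; each constant value then yields a single linear constraint on $n \pmod 3$ via $\sum a_i = 0$ (resp.\ $\sum b_i = 0$).

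What your approach buys: the argument is uniform across the three residue classes and across indices (you verify once that the boundary predicates $P_n$, $Q_1$ give the same transition list), and the ``if and only if'' dictionary gives both directions at once---nonexistence of bad configurations when $n\equiv 0$ and the explicit bad families when $n\equiv 1,2$---without a separate verification. The paper's approach, by contrast, is more concrete and requires no new notation beyond the guessing functions themselves; its chains $A_i$, $B_i$, $C_i$ are essentially your transition rules unpacked case by case, with the case split on $x_n - y_n$ playing the role of your ``$d$ is constant'' conclusion. The two proofs are thus different packagings of the same combinatorics, with yours the more structural.
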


\begin{proof}
Suppose $y = (y_1,\dots,y_n) \in [3]^n$ is the configuration of hat colours for the vertices in $W$ and that each vertex in $W$ guesses incorrectly.  Then $x \in \bigcap_{i=1}^n g_i^{-1}(y_i)^{\rm c}$, where
\[
\begin{split}
\bigcap_{i=1}^n g_i^{-1}(y_i)^{\rm c} = \bigcap_{i<n} &\{ x: x_i = y_i-1 \text{ or } x_{i+1} = y_i-1 \text{ or } (x_i,x_{i+1}) = (y_i+1,y_i+1) \}\\
&\cap \{x : x_n = y_n-1 \text{ or } x_1 = y_n \text{ or } (x_n,x_1) = (y_n+1,y_n-1) \}
\end{split}
\]

Suppose further that each vertex in $V$ guesses incorrectly.  We claim the following implications are true.
\begin{claim*} If $(x,y)$ is guessed incorrectly by all vertices then the following hold.  For all $i \neq 1$,
\begin{itemize}
\item $A_i$: if $x_i = y_i-1$ then $y_i = y_{i-1}+1$ and $x_{i-1} = y_{i-1}-1$;
\item $B_i$: if $x_i = y_i+1$ then either
\begin{enumerate}
\item $y_i = y_{i-1}$ and $x_{i-1} = y_{i-1}+1$, or
\item $y_i \neq y_{i-1}+1$ and $x_{i-1} = y_{i-1}-1$;
\end{enumerate}
\end{itemize}
and for all $i \neq n$,
\begin{itemize}
\item $C_i$: if $x_i = y_i$ then $y_{i+1} = y_i - 1$ and $x_{i+1} = y_{i+1}$.
\end{itemize}
\end{claim*}

\begin{proof}[Proof of Claim]
Take $i \neq 1$, and suppose $x_i = y_i-1$.  Since $v_i$ guesses incorrectly, we must have $y_i = y_{i-1}+1$, so that $x_i = y_{i-1}$.  But $x \in g_{i-1}^{-1}(y_{i-1})^{\rm c}$ which implies that $x_{i-1} = y_{i-1}-1$, establishing $A_i$.

Now suppose $x_i = y_i+1$.  Since $v_i$ guesses incorrectly, we must have $y_i \neq y_{i-1}+1$, so that $x_i \neq y_{i-1}-1$.  But $x \in g_{i-1}^{-1}(y_{i-1})^{\rm c}$ which implies that either $x_{i-1} = y_{i-1}-1$ or $(x_{i-1},x_i) = (y_{i-1}+1,y_{i-1}+1)$, the latter implying that $y_i = y_{i-1}$, which establishes $B_i$.

Finally, take $i \neq n$ and suppose $x_i = y_i$.  Since $x \in g_i^{-1}(y_i)^{\rm c}$, we must have $x_{i+1} = y_i-1$.  But $v_{i+1}$ guesses incorrectly which implies that $y_{i+1} = y_i - 1$.  To see this, use the fact that the function $f_i$, for $i \neq 1$, can also be written as
\[
f_i(y_{i-1},y_i) =
\begin{cases}
y_{i-1}-1 & \text{if $y_i \neq y_{i-1} - 1$},\\
y_{i-1}+1 & \text{if $y_i = y_{i-1} - 1$}.
\end{cases}
\]
Therefore $x_{i+1} = y_{i+1}$, establishing $C_i$.
\end{proof}

We use the implications $A_i,B_i$ and $C_i$ as follows.  First, suppose $x_n = y_n-1$. Then using the chain of implications $A_n,A_{n-1},\dots,A_2$ we find that $x_i = y_i-1$ for all $i$ and $y_i = y_{i-1} + 1$ for $i\neq 1$, so $y_n = y_1 + (n-1) \pmod 3$.  Since $x_1 = y_1-1$ and $v_1$ also guesses incorrectly, we must have $y_1 = y_n - 1$, a contradiction unless $n \equiv 2 \pmod 3$.  When $n \equiv 2 \pmod 3$, we discover that the configurations $x = (a+2,a,a+1,a+2,\dots,a), y = (a,a+1,a+2,a,\dots,a+1)$ for $a \in [3]$ are guessed incorrectly by all vertices.

Now suppose $x_n = y_n+1$.  Since $x \in g_n^{-1}(y_n)^{\rm c}$ we have that $x_1 \neq y_n+1$.  We consider the chain of implications $B_n,B_{n-1},\dots$ for as far as possible and note that case 1 of $B_i$ cannot occur for all $i \neq 1$, for then $x_i = y_i + 1$ for all $i$ and $y_i = y_{i-1}$ for $i \neq 1$, contradicting the fact that $x_1 \neq y_n+1$.  This means that for some $k > 1$ case 2 of $B_k$ occurs, so that $x_{k-1} = y_{k-1} - 1$.  We then apply the chain of implications $A_{k-1},A_{k-2},\dots,A_2$ to find that $x_i = y_i -1 $ for all $i < k$, so in particular $x_1 = y_1 - 1$.  Since $v_1$ guesses incorrectly, we have that $y_1 = y_n - 1$ which contradicts the fact that $x_1 \neq y_n+1$.  Hence for any configuration with $x_n = y_n+1$ there must be some vertex that guesses correctly.

Finally, suppose $x_n = y_n$.  Since $x \in g_n^{-1}(y_n)^{\rm c}$  we have that $x_1 = y_n$.  Since $v_1$ guesses incorrectly, we must have that $y_1 = y_n$.  To see this use the fact that $f_1$ can be also written as
\[
f_1(y_n,y_1) =
\begin{cases}
y_n & \text{if $y_1 \neq y_n$},\\
y_n-1 & \text{if $y_1 = y_n$}.
\end{cases}
\]
Therefore $x_1 = y_1$.  We now apply the chain of implications $C_1,C_2,\dots,C_{n-1}$ to find that $x_i = y_i$ for all $i$, and $y_{i+1} = y_i - 1$ for $i \neq n$.  Therefore $y_n = y_1 - (n-1) \pmod 3$, which is a contradiction unless $n \equiv 1 \pmod 3$.  When $n \equiv 1 \pmod 3$, we discover that the configurations $x=(a,a+2,a+1,a,\dotsc,a), y=(a,a+2,a+1,a,\dots,a)$ for $a \in [3]$ are guessed incorrectly by all vertices.
\end{proof}

Unfortunately, this `L-shaped' construction falls just short of proving 3-solvability when $n \not\equiv 0 \pmod 3$; indeed, out of the $3^{2n}$ possible hat configurations, there are only 3 where all vertices guess incorrectly!  


In any case, the family of cycles of length a multiple of 6 gives an answer to a question of Butler et al.\@ about edge-critical graphs.  A graph $G$ is called \emph{edge-critical for $q$ colours} if $G$ is $q$-solvable, but $G - e$ is not $q$-solvable for any edge $e \in G$.  For $q=2$ the only edge-critial graph is the graph of a single edge, and for $q>2$ there are at least two distinct edge-critical graphs, namely $K_q$ and some subgraph of the bipartite graph $K_{q-1,(q-1)^{q-1}}$ presented earlier.  Butler et al.\@ ask whether there are infinitely many graphs which are edge-critical for $q$ colours, for $q>2$.  Since trees are known not to be 3-solvable, the cycles of length a multiple of 6 form such an infinite family for $q=3$.

\begin{theorem} 
The family $\{ C_{6k} : k \in \mathbb{N} \}$ is an infinite family of edge-critical graphs for 3~colours.
\end{theorem}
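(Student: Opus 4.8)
The plan is short, since the statement is essentially a corollary of the theorem just proved together with a known fact about trees. I would argue in three steps.

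First I would establish $3$-solvability of each member of the family: writing $6k = 2n$ with $n = 3k$, we have $n \equiv 0 \pmod 3$, so the preceding theorem gives directly that $C_{6k}$ is $3$-solvable.

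Second I would show that every single-edge deletion destroys solvability. Deleting any edge $e$ from the cycle $C_{6k}$ leaves a connected acyclic graph on $6k$ vertices, i.e.\ a path, which is in particular a tree. By the result of Butler et al.\ \cite{BHKL08} that no tree is $3$-solvable, $C_{6k} - e$ is not $3$-solvable for any edge $e$; combined with the first step this is exactly the statement that $C_{6k}$ is edge-critical for $3$ colours.

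Third, the graphs $C_{6k}$ for $k \in \mathbb{N}$ have pairwise distinct orders, hence are pairwise non-isomorphic, so $\{C_{6k} : k \in \mathbb{N}\}$ is an infinite family. I do not anticipate any real obstacle here: the only point needing (minimal) care is to note that $C_{6k}-e$ is always a single path, hence connected, so that the ``trees are not $3$-solvable'' result of \cite{BHKL08} applies verbatim; the tree result is the most immediate non-solvability tool for this purpose, though one could in principle substitute another.
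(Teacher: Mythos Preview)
Your proposal is correct and matches the paper's own argument essentially verbatim: the paper likewise observes that $C_{6k}$ is $3$-solvable by the preceding theorem (since $6k=2n$ with $n\equiv 0\pmod 3$) and that deleting any edge yields a tree, which is not $3$-solvable by the result of Butler et al. There is nothing to add.
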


\section{Acknowledgment}
This work was produced while the second author was supported by the Engineering and Physical Sciences Research Council [grant number EP/J021784/1].



\end{document}